\newtheorem{theorem}{Theorem}[section]
\newtheorem{counter-example}[theorem]{Counter example}
\newtheorem{open question}[theorem]{Open question}
\newtheorem{corollary}[theorem]{Corollary}
\newtheorem{claim}[theorem]{Claim}
\newcommand{\eps}{{\varepsilon}}
\newcommand{\mdnote}[1]{{\color{red} \bf (#1 --Mike)}}
\providecommand{\algorithmname}{Algorithm}
\begin{document}

\begin{titlepage}
\author{Michael Dinitz\thanks{Dept.~of Computer Science, Johns Hopkins University.  Email: \texttt{mdinitz@cs.jhu.edu}}   \hspace{1cm} Michael Schapira\thanks{School of Computer Science and Engineering, The Hebrew University, Jerusalem, Israel. Email: \texttt{schapiram@cs.huji.ac.il}} \hspace{1cm} Gal Shahaf\thanks{Dept. of Mathematics, The Hebrew University, Jerusalem, Israel. Email: \texttt{gal.shahaf@mail.huji.ac.il}}
}

\title{Large Low-Diameter Graphs are Good Expanders}
\date{}

\maketitle

\begin{abstract}
We revisit the classical question of the relationship between the diameter of a graph and its expansion properties. One direction is well understood: expander graphs exhibit essentially the lowest possible diameter. We focus on the reverse direction, showing that ``sufficiently large'' graphs of fixed diameter and degree must be ``good'' expanders. We prove this statement for various definitions of ``sufficiently large'' (multiplicative/additive factor from the largest possible size), for different forms of expansion (edge, vertex, and spectral expansion), and for both directed and undirected graphs. A recurring theme is that the lower the diameter of the graph and (more importantly) the larger its size, the better the expansion guarantees. Aside from inherent theoretical interest, our motivation stems from the domain of network design. Both low-diameter networks and expanders are prominent approaches to designing high-performance networks in parallel computing, HPC, datacenter networking, and beyond. Our results establish that these two approaches are, in fact, inextricably intertwined. We leave the reader with many intriguing questions for future research.
\end{abstract}

\end{titlepage}

\section{Introduction}

Both the diameter of a graph and its expansion capture the ``connectedness'' of the graph, albeit in two very different senses. The diameter, i.e., the maximal distance between a pair of vertices, provides an upper bound on the length of shortest paths in the graph, whereas expansion measures the minimal ratio between a subset of vertices and its boundary. We revisit the classical question of relating these two traits. One direction is well known: good expansion implies a low diameter. Specifically, the diameter of a graph with good expansion is $O(\log n)$ (see, e.g.,~\cite{hoory2006expander}), which is asymptotically the lowest possible. We focus on the opposite, and largely unexplored, direction.

In general, low diameter does not guarantee good expansion. Consider, e.g., a graph on $n$ vertices that is a disjoint union of two cliques, each of size $\frac{n}{2}$. Removing one edge from each clique and connecting the cliques via two ``bridges'' results in a $(\frac{n}{2}-1)$-regular graph of diameter $3$ with very low expansion (which worsens as $n\rightarrow \infty$). We observe, however, that this ``bad'' graph is significantly smaller than the largest $(\frac{n}{2}-1)$-regular graph of diameter $3$ (which is of size $\Omega(n^3)$). Indeed, our investigation below reveals that, in contrast to the above, when the degree and the diameter are fixed and the size of the graph is ``sufficiently large'', the graph must have ``good'' expansion. We formalize this statement for different notions of ``large'', for different forms of expansion (edge, vertex, and spectral expansion), and for undirected/directed graphs.  Our results are presented in Section~\ref{sec:results}, but informally, ``sufficiently large'' means that the size of the graph is close to the best-known upper bound on the size, in either a multiplicative or additive setting.

We formalize the above statements and discuss implications of our results for network design and beyond, including the unification of two competing approaches to datacenter network design.


\subsection{The Degree/Diameter Problem} 

Before we can state our results, we must first define what we mean by a ``large, low-diameter graph".  To start off: how large can a $d$-regular graph of diameter $k$ (which we shall refer to as a ``$(d,k)$-graph'' henceforth) actually be? An upper bound on the size of such a graph is the classical \textit{Moore Bound}~\cite{hoffman1960moore}, denoted by $\mu_{d,k}$ (see Section~\ref{Preliminaries} for a formal definition). Extensive research has been devoted to determining the existence of graphs whose sizes match this upper bound (a.k.a.,~Moore Graphs) or well-approximate it. This line of study, termed the \textit{``degree/diameter problem''}, was initiated by Hoffman and Singleton~\cite{hoffman1960moore}. See~\cite{miller2005moore} for a detailed survey of results in this active field of research.

Graphs whose sizes exactly match the Moore Bound, referred to as ``Moore Graphs'' henceforth, only exist for very few values of $d$ and $k$~\cite{hoffman1960moore,bannai1981regular}. Consequently, various constructions for generating graphs whose sizes come ``close'' to the Moore Bound, which we call ``approximate-Moore Graphs'', have been devised.

Specifically, constant multiplicative approximations (MMS-graphs~\cite{mckay1998note}) and constant additive approximations (e.g., polarity graphs~\cite{erdos1962problem,brown1966graphs}) have been devised for the case of diameter $k=2$. Delorme~\cite{miller2005moore, delorme1985grands, delorme1985large} constructed infinite series of $(d,k)$-graphs whose sizes arbitrarily approach the Moore Bound for diameters $k=3$ and $k=5$. Graph constructions whose sizes approximate the Moore Bound within non-constant multiplicative factors exist for arbitrary values of $k$ (examples include, e.g., \textit{de Bruijn} \cite{de1946combinatorial} and \textit{Canale-Gomez} \cite{canale2005asymptotically} graphs for the undirected case, and \textit{Alegre} and \cite{miller2005moore}
\textit{Kautz} \cite{elspas1968theory} digraphs for the directed analogue of the problem). While constructing approximate-Moore Graphs whose sizes arbitrarily approach the Moore Bound for arbitrary values of $k$ remains an important and widely studied open question, such graphs are believed to exist for sufficiently large $d$ and $k$, as conjectured, e.g., by Bollob{\'a}s in~\cite{bollobas1978extremal}.

Our investigation of the relation between diameter and expansion also uses the Moore Bound as a benchmark and compares the size of $(d,k)$-graphs to $\mu_{d,k}$. We consider both multiplicative and additive approximations to the Moore Bound. Our results establish that good solutions to the degree-diameter problem \emph{must} be good expanders, establishing a novel link between two prominent and classical lines of research. In addition, our results yield new expansion bounds for all of the classical constructions of low-diameter graphs discussed above.


\subsection{Our Results} \label{sec:results} 

Our results relating the size of a $(d,k)$-graph to its expansion are summarized in Table~\ref{results}, with $\lambda(G)$, $h_e(G)$, and $\phi_V(G)$ denoting spectral expansion, edge expansion, and vertex expansion, respectively (formal definitions can be found in Section~\ref{Preliminaries}). $\widetilde{\mu_{d,k}}$ is the analogue of the Moore Bound for directed graphs.

\begin{table}[]\label{results}
	\centering
	\label{my-label}
	\begin{tabular}{|c|c|c|}
		\hline
		& \textbf{Size}                                           & \textbf{Expansion guarantees}                                           \\ \hline
		\multirow{4}{*}{\textbf{$(d,k)$-graph}}   & $n\geq \mu_{d,k} - O(d^{k/2})$                          & $\lambda(G) = O(\sqrt{d})$                                              \\ \cline{2-3} 
        & $n \geq (1-\eps) \mu_{d,k}$ & $\lambda(G) = O(\eps^{1/k})d$ \\ \cline{2-3}
		& \multirow{2}{*}{$n=\alpha \cdot \mu_{d,k}$}             & $h_e(G)\geq \frac{\alpha d}{2k} \cdot \left(1-\frac{1}{(d-1)^k}\right)$ \\ \cline{3-3} 
		&                                                         & $\phi_V(G)\geq \frac{\alpha}{2(k-1) +\alpha}$                           \\ \hline
		\multirow{3}{*}{\textbf{$k=2$}}           & $n$                                                     & $\lambda(G)\leq \frac{1+ \sqrt{1+4(d^2+d-n)}}{2}$                       \\ \cline{2-3} 
		& \multirow{2}{*}{$n=\alpha \cdot d^2$}                   & $h_e(G)\geq \frac{2d+1-\sqrt{4(1-\alpha)d^2+4d+1}}{4}$                  \\ \cline{3-3} 
		&                                                         & $\phi_V(G)\geq \frac{2\alpha}{2\alpha+1}$                               \\ \hline
		\textbf{$k=3$}                            & $n=\alpha \cdot d^3$                                    & $\phi_V(G)\geq \frac{\alpha}{\alpha+1}$                                 \\ \hline
		\multirow{2}{*}{\textbf{$(d,k)$-digraph}} & \multirow{2}{*}{$n=\alpha \cdot \widetilde{\mu_{d,k}}$} & $h_e(G)\geq \frac{\alpha}{2k} (d-\frac{1}{d^k})$                        \\ \cline{3-3} 
		&                                                         & $\phi_V(G)\geq \frac{\alpha \cdot d}{2(d+1)(k-1)+\alpha \cdot d}$       \\ \hline
	\end{tabular}
	\caption{Summary of Results Relating Expansion and Diameter.}
\end{table}

We begin in Section~\ref{spectral} with our main results, which provide bounds on the spectral expansion of large $(d,k)$-graphs.  In particular, our results establish that if the size of a graph is very close \emph{additively} to the Moore bound, then the graph is essentially an optimal expander. In addition, if the graph has size that is close \emph{multiplicatively} to the Moore bound, the spectral expansion might no longer be optimal, but is still very good.

We next turn our attention to combinatorial notions of expansion: edge expansion and vertex expansion. We provide (in Section~\ref{coarse}) guarantees on both the edge and the vertex expansion of $(d,k)$-graphs in terms of their multiplicative distance from the Moore Bound. Our analysis leverages careful counting arguments to bound the ratio between the cardinality of a set of vertices and the size of its boundary. We also prove, through more refined analyses, improved results for diameters $2$ and $3$. 

\noindent{\bf Our technique.} The key technical insight underlying our results for spectral expansion is a novel link, which we believe is of independent interest, between the nontrivial eigenvalues of a graph's adjacency matrix and the distance of the graph from the Moore Bound. Specifically, the proofs of our results for spectral expansion rely on the analysis of \emph{non-backtracking paths} in the graph. A path is said to be non-backtracking if it does not traverse an edge back and forth consecutively. We prove that the matrix that corresponds to all non-backtracking paths of length at most $k$ must consist of strictly positive entries and shares all eigenvectors of the adjacency matrix $A$. We establish the above algebraic relation between the two matrices by employing the \textit{Geronimus Polynomials}~\cite{biggs1993algebraic,sole1992second}, a well-known class of orthogonal polynomials, as operators acting on the adjacency matrix. Given the spectrum of $A$, an asymptotic estimation of the polynomials' coefficients allows us to bound the spectrum of the non-backtracking paths matrix. We then subtract from the latter the all-ones matrix and use the leading eigenvalue of the remaining matrix (which can be computed directly) to bound the nontrivial eigenvalues of the adjacency matrix $A$.

Our technique should be contrasted with employing Hashimoto's non-backtracking operator~\cite{hashimoto1989zeta} to reason about non-backtracking paths in a graph (e.g., in the context of localization and centrality~\cite{martin2014localization}, clustering~\cite{krzakala2013spectral}, mixing time acceleration \cite{alon2007non}, and percolation~\cite{karrer2014percolation} in networks). In our context, applying Hashimoto's operator involves reasoning about intricate relations between the spectra of the adjacency matrix $A$ and another matrix, called the ``non-backtracking matrix'' (via the Ihara-Bass formula~\cite{bordenavenew}). Geronimous Polynomials, a subfamily of the more renowned Chebyshev polynomials, allow for much simpler analysis. We believe that our techniques, and the (yet to be explored) connections between Geronimous Polynomials and Hashimoto's operator, are of independent interest and may find wider applicability.



\noindent{\bf ``Not-so-sparse'' expanders.} Importantly, our research diverges from the main vein of prior research on expanders. Expanders are commonly viewed as highly-connected sparse graphs. Indeed, the bulk of literature on this topic assumes that the degree of these graphs is essentially constant with respect to the size of the graph (i.e., $d \ll n$). In contrast, the size of a ``large'' $(d,k)$-graph graph is $O(d^k)$.

\begin{table}[]
\centering
\begin{tabular}{|c|c|c|c|}
\hline
\textbf{Construction}   & \textbf{Algebraic expansion} & \textbf{Edge expansion}     & \textbf{Vertex expansion}                                                                                                           \\ \hline
\textit{de Bruijn} (with $k=2$) \cite{de1946combinatorial}    &                           -                        & $-$                                   & $\frac{1}{3}$                                                                                                     \\ \hline
\textit{Canale-Gomez} \cite{canale2005asymptotically}     &                              -         & $\frac{d}{2k\cdot 1.57^k} \cdot \left(1-\frac{1}{(d-1)^k}\right)$                                & $\frac{1.57^{-k}}{2(k-1) +1.57^{-k}}$                                                                                               \\ \hline
\textit{Alegre digraph} \cite{miller2005moore}                 &                -                  & $\frac{25\cdot 2^k}{32k\cdot d^k} \left(d-\frac{1}{d^k}\right)$ & $\frac{\left(\frac{2}{d}\right)^k \cdot \frac{25}{16} \cdot d}{2(d+1)(k-1)+\left(\frac{2}{d}\right)^k \cdot \frac{25}{16} \cdot d}$ \\ \hline
\textit{Kautz digraphs} \cite{elspas1968theory}                &         -              & $\frac{1}{2k} \left(d-\frac{1}{d^k}\right)$                                                      & $\frac{d}{2(d+1)(k-1)+d}$                                                                                                           \\ \hline
\textit{Polarity graph} \cite{erdos1962problem,brown1966graphs} & $\lambda(G)\leq \frac{1+\sqrt{1+8(d-1)}}{2}$ & $\frac{2d+1-\sqrt{4d+1}}{4}$                                                                     & $\frac{2}{3}$                                                                                                                       \\ \hline
\textit{MMS-graphs} \cite{mckay1998note}                                  &          $\lambda(G)\leq \frac{1+\frac{1}{3}\sqrt{d^2+d+7}}{2}$             & $\frac{2d+1-\sqrt{\frac{4}{9}d^2+4d+1}}{4}$                                  & $\frac{16}{25}$                                                                                                                     \\ \hline
\end{tabular}
\caption{Implications for Known Constructions of Low-Diameter Graphs}
\label{implications table}
\end{table}

\noindent{\bf Implications for network design and beyond.} Aside from inherent theoretical interest, our motivation stems from the domain of network design. Low-diameter networks have been widely studied in the context of high-performance-computing (HPC) architectures (see, e.g.,~\cite{kim2008technology,besta2014slim,arimilli2010percs,kim2007flattened}), parallel computing~\cite{leighton2014introduction}, and the design of fault-tolerant networks~\cite{bermond1989large,bermond1989bruijn,besta2014slim,esfahanian1985fault,pradhan1985fault}. Of special interest in this literature are large networks of very low diameters (e.g., $2$ or $3$), as short path lengths translate to low latency in data delivery and also to low packet-queueing delays and power consumption (due to having few intermediate network devices en route to traffic destinations~\cite{erdos1962problem,brown1966graphs,mckay1998note,besta2014slim,kim2008technology,kim2009cost}). Similarly to low-degree networks, expanders have been shown to induce high performance in a broad spectrum of network design contexts.

Recently, the focus on either the diameter or the expansion of a network topology gave rise to two competing approaches for datacenter architecture design~\cite{valadarsky2016xpander,singla2014high,lucian2011jellyfish,besta2014slim,kim2008technology,kim2009cost}. Specifically, an important line of research in datacenter design (see, e.g.,~\cite{singla2014high,lucian2011jellyfish,besta2014slim,guo13}) relies on (either implicitly or explicitly) utilizing graphs whose sizes are as large as possible for a given diameter and degree as datacenter network topologies\footnote{The authors of~\cite{lucian2011jellyfish}, for instance, write that ``Intuitively, the best known degree-diameter topologies should support a large number of servers with high network bandwidth and low cost (small degree)... Thus, we propose the best-known degree-diameter graphs as a benchmark for comparison.''}. A different strand of research investigates how utilizing expander graphs as datacenter network topologies can be turned into an operational reality~\cite{valadarsky2016xpander,dinitz2016xxx,kassing2017static}. 

Our results show that these two approaches are, in fact, inextricably intertwined; not only do expanders exhibit low (in fact, near-optimal) diameters~\cite{hoory2006expander}, but constructing large low-diameter datacenter networks effectively translates to constructing good expanders.  Thus these two approaches to designing datacenter networks can essentially be regarded as one: the search for extremely strong expanders. Our results provide new expansion guarantees for a number of well-studied low-diameter networks, including MMS graphs~\cite{mckay1998note} (proposed for the context high-performance computing and datacenters, see \textit{Slim Fly}~\cite{besta2014slim}), polarity graphs~\cite{erdos1962problem,brown1966graphs}, Canale-Gomez graphs~\cite{canale2005asymptotically}, and more. See summary of the implications of our results for different graph constructions in Table~\ref{implications table}. Our results for spectral expansion essentially match previously established results, thus generalizing and unifying prior construction-specific bounds.

Beyond the implications for network design, the study of low-diameter networks also pertains to other areas such as feedback registers~\cite{fredricksen1982survey,lempel1970homomorphism} and decoders~\cite{collins1992vlsi}.

\section{Preliminaries} \label{Preliminaries}

We provide below a brief exposition of graph expansion and the Moore Bound. We refer the reader to~\cite{hoory2006expander} and~\cite{miller2005moore} for detailed expositions of these topics.

\noindent{\bf Graph expansion.} Let $G=(V,E)$ be an undirected graph of size $|V|=n$. $G$ is said to be \emph{$d$-regular} if each of its vertices is of degree $d$, and of \emph{diameter} $k$ if the maximum distance between any two vertices in the graph is $k$. $d$-regular graphs of diameter $k$ are denoted throughout the paper as \emph{$(d,k)$-graphs}.

The combinatorial expansion of the graph reflects an isoperimetric view and is the minimal ratio between the boundary $\partial S$ of a set $S$ and its cardinality. Different interpretations of $\partial S$ give rise to different notions of expansion.

The \emph{edge expansion} of $G$ is 
$$h_e(G):=\min_{|S|\leq \frac{n}{2}} \frac{|e(S,S^c)|}{|S|}$$
where $e(S,S^c):=\{(u,v)\in E|u\in S,v\in S^c \}$. 

The \emph{vertex expansion} of $G$ is 
$$\phi_V(G) = \min_{0<|S|\leq \frac{n}{2}} \frac{|N(S)|}{|S|}.$$
where $N(S):=\{v\in S^c|\quad \exists u\in S \ \text{s.t. } (u,v)\in E\}$.\footnote{The definitions of edge and vertex expansion admit several variants, based on either the size of the cut or the type of the boundary (see \cite{hoory2006expander} for examples). While we adopt the most common of those, our results can be stated w.r.t.~other variants as well.}


We next define the algebraic (spectral) notion of expansion. Let $A$ be the adjacency matrix of the graph. Since $A$ is symmetric it is diagonalizable with respect to an orthonormal basis, and the corresponding eigenvalues are real, and so can be ordered as follows:
$$\lambda_1\geq\lambda_2\geq...\geq\lambda_n.$$

The first eigenvalue of a $d$-regular graph satisfies $\lambda_1=d$ and has the all-ones vector \textbf{$1_n$} as the associated eigenvector. Let $\lambda(G):=\max\{|\lambda_2|,|\lambda_n|\}$. A graph $G$ is said to be an expander if $\lambda(G)$ is bounded away from $d$ by some constant \cite{arora2009computational}. The \emph{algebraic expansion} (or spectral expansion) is then defined as $d-\lambda(G)$, termed the \emph{spectral gap}. The larger the gap, the better the expansion.



\noindent{\bf The Moore Bound.} How large can a $(d,k)$-graph be? A straightforward upper bound is obtained by summation of the vertices according to their distance from a fixed vertex $v_0\in V$. Let $m_j$ denote the number of vertices at distance $j$ from $v_0$. Note that $m_0 = 1$ and $m_1 = d$. As vertices at distance $j\geq 2$ must be adjacent to some vertex at distance $j-1$, we have that $m_{j}\leq (d-1)m_{j-1}$. A simple induction implies that $m_{j}\leq d(d-1)^{j-1}$. Now since the diameter is $k$, all vertices have distance at most $k$ from $v_0$, and hence $n \leq 1+d+d(d-1) + d(d-1)^2 + ...+ d(d-1)^{k-1}$. We denote this expression, known as the \emph{Moore Bound} of the graph, by

\[\mu_{d,k} := 
1 + d\sum_{i=0}^{k-1}(d-1)^i =
\left\{\begin{matrix} 
2k+1 & \mbox{if } d=2  \\ 
1+d\cdot \frac{(d-1)^k-1}{d-2} & \mbox{if } d>2 \end{matrix}\right.\]

\section{Diameter vs.~Algebraic Expansion} \label{spectral}

We establish below a relationship between the nontrivial eigenvalues of $A$ and the distance of the graph from the Moore Bound. This relationship will enable us to prove a variety of bounds on the algebraic expansion of approximate-Moore graphs. This novel link relies on the following class of orthogonal polynomials: let $P_0(x)=1$, $P_1(x)=x$, $P_2(x) = x^2 - d$, and for every $t>2$ define $P_t(x)$ by the recurrence relation
\[P_t(x) = x P_{t-1}(x) - (d-1)P_{t-2}(x).\] 

The significance of this class of polynomials, termed the \emph{``Geronimus Polynomials''}~\cite{biggs1993algebraic,sole1992second}, is reflected in the main technical theorem of this section:

\begin{theorem} \label{eq: eigenvalue bound}	
	Let $G$ be $(d,k)$-graph of size $n$. Then, every nontrivial eigenvalue $\lambda<d$ of $G$ satisfies
	$$\left|\sum_{t=0}^{k} P_t(\lambda)\right| \leq \mu_{d,k} - n$$
\end{theorem}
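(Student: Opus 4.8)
The plan is to realize the operator $\sum_{t=0}^{k}P_t(A)$ as the matrix that counts non-backtracking walks of bounded length in $G$, and then to read off the inequality from elementary Perron--Frobenius facts about nonnegative symmetric matrices. Throughout, $A$ denotes the adjacency matrix of $G$, $I$ the identity, $J=1_n 1_n^{\top}$ the all-ones matrix, and, for $t\ge 0$, $B_t$ the $n\times n$ matrix whose $(u,v)$ entry is the number of non-backtracking walks of length exactly $t$ from $u$ to $v$ in $G$.

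The technical core is the identity $P_t(A)=B_t$ for all $0\le t\le k$, which I would prove by induction on $t$. The cases $t=0,1$ are clear ($B_0=I$, $B_1=A$). For $t=2$ one argues directly: a walk $u\to w\to v$ is non-backtracking unless it retraces its single edge, i.e.\ unless $v=u$, so exactly the $d$ closed walks of length $2$ based at each vertex are discarded and $B_2=A^2-dI=P_2(A)$. For the inductive step $t\ge 3$, consider $A\,B_{t-1}$: its $(u,v)$ entry counts walks obtained by prepending an edge $u\to w$ to a non-backtracking walk of length $t-1$ from $w$ to $v$. Such a walk fails to be non-backtracking exactly when its first two edges coincide, i.e.\ when it has the shape $u\to w\to u\to\cdots\to v$; fixing the non-backtracking walk of length $t-2$ from $u$ to $v$ that remains after deleting the first two steps, the vertex $w$ may be any of the $d-1$ neighbours of $u$ other than the second vertex of that walk. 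Hence the over-count is exactly $(d-1)B_{t-2}$, giving $B_t=A\,B_{t-1}-(d-1)B_{t-2}=P_t(A)$ by the Geronimus recurrence. I expect this combinatorial bookkeeping to be the most delicate part; note in particular that the $t=2$ case is genuinely exceptional (one subtracts $d$, not $d-1$), which is exactly why the recurrence only begins at $t>2$.

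Write $B:=\sum_{t=0}^{k}P_t(A)=\sum_{t=0}^{k}B_t$, the matrix of non-backtracking walks of length at most $k$. Two properties of $B$ now do all the work. First, every entry of $B$ is at least $1$: each diagonal entry already receives $1$ from the empty walk ($B_0=I$), and for $u\ne v$ a shortest $u$--$v$ path has length at most $k$ (since $\mathrm{diam}(G)=k$) and is in particular non-backtracking, so it contributes to some $B_t$ with $t\le k$. Thus $B-J$ has nonnegative entries. Second, $B$ is a polynomial in the symmetric matrix $A$, hence symmetric, and every eigenvector $v$ of $A$ with $Av=\lambda v$ satisfies $Bv=\big(\sum_{t=0}^{k}P_t(\lambda)\big)v$. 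Taking $v=1_n$ and $\lambda=d$, an easy induction on the recurrence gives $P_0(d)=1$ and $P_t(d)=d(d-1)^{t-1}$ for $t\ge 1$, so $B\,1_n=\big(1+d\sum_{i=0}^{k-1}(d-1)^i\big)1_n=\mu_{d,k}\,1_n$. In particular all row sums of $B$ equal $\mu_{d,k}$, and, $A$ being symmetric, $1_n^{\perp}$ is $A$-invariant and is spanned by eigenvectors attached to the nontrivial eigenvalues.

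Finally set $M:=B-J$. It is symmetric, has nonnegative entries, and has every row sum equal to $\mu_{d,k}-n$, which is nonnegative by the Moore bound. For a nonnegative symmetric matrix all of whose row sums equal $r\ge 0$, the spectral radius is exactly $r$: it is at least $r$ because $M\,1_n=r\,1_n$, and at most $r$ by the standard bound $\rho(M)\le\max_u\sum_v M_{uv}$ for nonnegative matrices. Since the spectral radius of a symmetric matrix equals its operator norm, every eigenvalue of $M$ lies in $[-(\mu_{d,k}-n),\,\mu_{d,k}-n]$. To conclude, decompose $\mathbb{R}^n=\mathrm{span}(1_n)\oplus 1_n^{\perp}$: on $\mathrm{span}(1_n)$ the matrix $M$ acts by the scalar $\mu_{d,k}-n$, while on $1_n^{\perp}$ we have $J=0$, so $M$ coincides with $B$ there and its eigenvalues are exactly the numbers $\sum_{t=0}^{k}P_t(\lambda)$ over the nontrivial eigenvalues $\lambda<d$ of $A$. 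As each such eigenvalue of $M$ is bounded in absolute value by $\mu_{d,k}-n$, the theorem follows.
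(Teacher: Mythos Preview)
Your proof is correct and follows essentially the same approach as the paper: identify $\sum_{t=0}^k P_t(A)$ with the non-backtracking walk matrix, subtract $J$ to obtain a nonnegative matrix $M$ with constant row sums $\mu_{d,k}-n$, and read off the eigenvalue bound from the spectral radius of $M$. The only cosmetic difference is that where the paper invokes Perron--Frobenius to identify $\mu_{d,k}-n$ as the leading eigenvalue of $M$, you use the (arguably more direct) row-sum bound $\rho(M)\le\max_u\sum_v M_{uv}$ together with symmetry; both arguments are standard and equivalent here.
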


Before delving into the proof of Theorem~\ref{eq: eigenvalue bound}, we discuss some of its implications. Theorem~\ref{eq: eigenvalue bound} can be applied to provide meaningful guarantees regarding the spectral expansion of low-diameter graphs whose sizes approach the Moore Bound. Constructing large graphs of very low diameter, e.g., $k=2,3$, has received much attention from both a theoretical perspective (see, e.g.,~\cite{erdos1962problem,brown1966graphs,mckay1998note}) and a practical perspective (see, e.g.,~\cite{besta2014slim,kim2008technology,kim2009cost}). An immediate implication of Theorem~\ref{eq: eigenvalue bound} is the following:

\begin{theorem}\label{thm: spectral k=2}
	Let $G$ be a $d$-regular graph of diameter $k=2$ and size $n$, then 
	$$\lambda(G)\leq \frac{1+ \sqrt{1+4(d^2+d-n)}}{2}.$$
\end{theorem}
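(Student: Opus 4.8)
The plan is to obtain Theorem~\ref{thm: spectral k=2} as an immediate corollary of Theorem~\ref{eq: eigenvalue bound} by specializing to $k=2$. First I would write out the relevant Geronimus polynomials explicitly: $P_0(x)=1$, $P_1(x)=x$, and $P_2(x)=x^2-d$, so that $\sum_{t=0}^{2}P_t(\lambda)=\lambda^2+\lambda+1-d$. Next I would evaluate the Moore Bound at $k=2$, namely $\mu_{d,2}=1+d\cdot d=d^2+1$. Substituting both into Theorem~\ref{eq: eigenvalue bound}, every nontrivial eigenvalue $\lambda<d$ of $G$ satisfies $\left|\lambda^2+\lambda+1-d\right|\le d^2+1-n$.

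From here I would retain only the upper side of the absolute value, $\lambda^2+\lambda+1-d\le d^2+1-n$, which rearranges to $\lambda^2+\lambda-(d^2+d-n)\le 0$. Since any graph of finite diameter is connected, it has no isolated issues with the Moore Bound: $n\le\mu_{d,2}=d^2+1\le d^2+d$, so $d^2+d-n\ge 0$ and the discriminant $1+4(d^2+d-n)$ is (strictly) positive. The real roots of $x^2+x-(d^2+d-n)$ are therefore $\tfrac{-1\pm\sqrt{1+4(d^2+d-n)}}{2}$, and the quadratic inequality forces $\lambda$ to lie between them; in particular $\lambda\le\tfrac{-1+\sqrt{1+4(d^2+d-n)}}{2}$ and $\lambda\ge\tfrac{-1-\sqrt{1+4(d^2+d-n)}}{2}$, which together yield $|\lambda|\le\tfrac{1+\sqrt{1+4(d^2+d-n)}}{2}$.

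Finally I would invoke connectedness once more to conclude that $\lambda_1=d$ is simple and that $\lambda_2,\dots,\lambda_n$ are all nontrivial eigenvalues strictly less than $d$, hence all subject to the bound above. Since $\lambda(G)=\max\{|\lambda_2|,|\lambda_n|\}$, this gives exactly $\lambda(G)\le\tfrac{1+\sqrt{1+4(d^2+d-n)}}{2}$.

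There is essentially no obstacle here: the statement is a one-line deduction from the section's main technical theorem once the polynomials and $\mu_{d,2}$ are plugged in. The only two points meriting a moment's care are checking that $d^2+d-n\ge 0$ so that the square root is real (which follows from $n\le\mu_{d,2}$), and confirming that the entire spectrum apart from $\lambda_1=d$ consists of ``nontrivial eigenvalues $\lambda<d$'' in the sense of Theorem~\ref{eq: eigenvalue bound} (which follows from $G$ being connected). Everything else is solving a quadratic inequality.
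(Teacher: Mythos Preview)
Your proposal is correct and follows exactly the paper's approach: specialize Theorem~\ref{eq: eigenvalue bound} to $k=2$, plug in $P_0+P_1+P_2=\lambda^2+\lambda+1-d$ and $\mu_{d,2}=d^2+1$, and solve the resulting quadratic inequality. The paper's own proof is the same argument compressed to two lines; you have simply (and correctly) spelled out the details the paper leaves implicit, including the check that the discriminant is nonnegative and that the bound applies to every nontrivial eigenvalue.
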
 \label{spec 2-diam thm}
\begin{proof} 
	Applying the Geronimus Polynomials $P_t(\lambda)$ for $0\leq t\leq 2$ in Theorem \ref{eq: eigenvalue bound} yields
	$$\lvert1+ \lambda+(\lambda^2-d)\rvert \leq \mu_{d,2} - n = d^2+1-n.$$
	The result follows from solving the quadratic inequality.
\end{proof}

This theorem immediately bounds the algebraic expansion of polarity graphs~\cite{erdos1962problem,brown1966graphs} and MMS graphs~\cite{mckay1998note} claimed in Table~\ref{implications table}, as both of these classes of graphs have diameter $2$. What about graphs of diameter $k>2$? A more careful analysis of the Geronimus polynomials for larger values of $k$ allows us to use Theorem~\ref{eq: eigenvalue bound} to prove two different expansion bounds. The first is an extremely strong expansion bound but requires the size of the graph to be additively close to the Moore bound, whereas the second allows a small multiplicative gap between the size and the Moore bound but establishes a weaker expansion guarantee.

\begin{theorem}\label{spectral main}
	Let $G$ be a $(d,k)$-graph of size $n\geq\mu_{d,k}-O(d^{k/2})$, for some constant $k>0$. Then
	$\lambda(G)=O(\sqrt{d})$.
\end{theorem}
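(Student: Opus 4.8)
The plan is to run Theorem~\ref{eq: eigenvalue bound} against the hypothesis. Writing the assumption as $n\ge \mu_{d,k}-C\,d^{k/2}$ for a constant $C$, Theorem~\ref{eq: eigenvalue bound} gives, for every nontrivial eigenvalue $\lambda<d$ of $G$,
\[\left|\sum_{t=0}^{k}P_t(\lambda)\right|\le \mu_{d,k}-n\le C\,d^{k/2}.\]
So the whole statement reduces to the following analytic fact about the Geronimus polynomials: if $\big|\sum_{t=0}^{k}P_t(\lambda)\big|=O(d^{k/2})$ then $|\lambda|=O(\sqrt d)$. My first step is to get a usable closed form for $Q_k(x):=\sum_{t=0}^{k}P_t(x)$. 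Introduce the ``plain'' Chebyshev-type polynomials $p_0=1$, $p_1=x$, $p_t=x p_{t-1}-(d-1)p_{t-2}$; these obey the same recurrence as the $P_t$ but with the non-anomalous value $p_2=x^2-(d-1)$ in place of $P_2=x^2-d$. An easy induction then yields $P_t=p_t-p_{t-2}$ for all $t\ge 2$, and therefore the sum telescopes:
\[Q_k(x)=\sum_{t=0}^{k}P_t(x)=p_{k-1}(x)+p_k(x).\]

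Second, I would insert the standard trigonometric representation $p_t\!\left(2\sqrt{d-1}\cos\theta\right)=(d-1)^{t/2}\,\frac{\sin((t+1)\theta)}{\sin\theta}$, valid on $[-2\sqrt{d-1},\,2\sqrt{d-1}]$, together with its hyperbolic continuation $p_t\!\left(2\sqrt{d-1}\cosh\xi\right)=(d-1)^{t/2}\,\frac{\sinh((t+1)\xi)}{\sinh\xi}$ for $x>2\sqrt{d-1}$. If $|\lambda|\le 2\sqrt{d-1}$ there is nothing to prove for that eigenvalue, since $2\sqrt{d-1}=O(\sqrt d)$. Otherwise, for $\lambda>2\sqrt{d-1}$ write $\lambda=2\sqrt{d-1}\cosh\xi$ with $\xi>0$; then both terms of $Q_k(\lambda)=p_{k-1}(\lambda)+p_k(\lambda)$ are positive and, using the elementary bound $\sinh((k+1)\xi)/\sinh\xi\ge e^{k\xi}$, one gets $Q_k(\lambda)\ge (d-1)^{k/2}e^{k\xi}$. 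The bottom eigenvalue $\lambda_n<-2\sqrt{d-1}$ is handled identically after invoking the parity $P_t(-x)=(-1)^tP_t(x)$ (equivalently $p_t(-x)=(-1)^tp_t(x)$), which gives $|Q_k(\lambda_n)|=\big|p_k(|\lambda_n|)-p_{k-1}(|\lambda_n|)\big|\ge \tfrac12(d-1)^{k/2}e^{k\xi}$ once $d\ge 5$ (so that $\sqrt{d-1}-1\ge\tfrac12\sqrt{d-1}$); the case of bounded $d$ is trivial because then $\lambda(G)\le d=O(1)=O(\sqrt d)$.

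Finally, combining the two inequalities: $(d-1)^{k/2}e^{k\xi}\le O(d^{k/2})$, hence $e^{k\xi}\le O\!\big((1+\tfrac1{d-1})^{k/2}\big)\le O\!\big(e^{k/(2(d-1))}\big)=O(1)$ since $k$ is a fixed constant. Therefore $\xi=O(1)$, so $\lambda=2\sqrt{d-1}\cosh\xi=O(\sqrt d)$, and likewise $|\lambda_n|=O(\sqrt d)$. Since every nontrivial eigenvalue is thus $O(\sqrt d)$ in absolute value, $\lambda(G)=\max\{|\lambda_2|,|\lambda_n|\}=O(\sqrt d)$.

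I expect the only real work to be the algebra that isolates the Geronimus polynomials: verifying the identity $P_t=p_t-p_{t-2}$ (where the anomalous value $P_2=x^2-d$ is precisely what makes the telescoping work) and then extracting from the Chebyshev/hyperbolic closed form the sharp statement that, outside the interval $[-2\sqrt{d-1},2\sqrt{d-1}]$, the quantity $|Q_k(\lambda)|$ grows like $(d-1)^{k/2}$ times an exponential in $k\cdot\mathrm{arccosh}\!\big(|\lambda|/2\sqrt{d-1}\big)$. Everything downstream of that is a one-line estimate. The points requiring care are the sign bookkeeping for the bottom eigenvalue and keeping track of the $(1+\tfrac1{d-1})^{k/2}$ factor (harmless while $k$ is constant, and in fact while $k=O(d)$).
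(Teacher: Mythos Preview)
Your proof is correct and complete, but it takes a genuinely different route from the paper's. Both arguments start from Theorem~\ref{eq: eigenvalue bound} and reduce the problem to showing that $\bigl|\sum_{t=0}^{k}P_t(\lambda)\bigr|$ is large whenever $|\lambda|\gg\sqrt{d}$; they diverge in how this analytic fact is established.

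The paper never writes down a closed form for the sum. Instead it analyzes the coefficients of each $P_t$ asymptotically (Claim~\ref{claim: Geronimus coefficients} and Corollary~\ref{cor:Geronimus}: $a_{t,i}=(-1)^{(t-i)/2}\Theta(d^{(t-i)/2})$), and from this shows (Claim~\ref{cl:ev-large}) that if $|\lambda|=\Theta(d^{\alpha})$ with $\alpha>\tfrac12$ then $|P_t(\lambda)|=\Theta(d^{t\alpha})$. The contradiction then follows by comparing the dominant term $|P_k(\lambda)|=\Theta(d^{k\alpha})$ to the hypothesis $\mu_{d,k}-n=O(d^{k/2})$. The argument is purely through asymptotic coefficient estimates and never leaves the realm of $\Theta(\cdot)$ notation (with constants depending on the fixed $k$).

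Your approach is more structural: the identity $P_t=p_t-p_{t-2}$ collapses the sum to $Q_k=p_k+p_{k-1}$, and the explicit hyperbolic representation of the second-kind Chebyshev-type polynomials $p_t$ yields the sharp lower bound $|Q_k(\lambda)|\gtrsim(d-1)^{k/2}e^{k\xi}$ outside the Ramanujan interval. This buys you explicit constants, a cleaner treatment of the sign at $\lambda_n$, and a proof that visibly extends to $k=O(d)$ (as you note). The paper's approach, by contrast, avoids having to know any closed form and works entirely from the recurrence, at the cost of softer constants hidden in the $\Theta$'s.
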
 

Since any $d$-regular graph must satisfy $\lambda(G) \geq \sqrt{d}$ (see \cite{hoory2006expander} for details), Theorem~\ref{spectral main} implies that an additive approximation of $O(d^{k/2})$ of the Moore Bound implies essentially optimal spectral properties. 


\begin{theorem} \label{weak-expansion}
Let $G$ be a $(d,k)$-graph of size $n \geq (1-\eps) \mu_{d,k}$, for some constant $k>0$. Then $\lambda(G) \leq O(\eps^{1/k}) \cdot d$.
\end{theorem}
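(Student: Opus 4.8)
The plan is to derive this from the master inequality of Theorem~\ref{eq: eigenvalue bound}. Write $S_k(x):=\sum_{t=0}^{k}P_t(x)$. Since $G$ has finite diameter it is connected, so all of its nontrivial eigenvalues lie in $[-d,d)$, and the hypothesis $n\geq(1-\eps)\mu_{d,k}$ turns Theorem~\ref{eq: eigenvalue bound} into the single clean statement that every nontrivial eigenvalue $\lambda$ obeys $|S_k(\lambda)|\leq\mu_{d,k}-n\leq\eps\,\mu_{d,k}$. The whole task then reduces to a fact about one univariate polynomial: a point $\lambda\in[-d,d)$ at which $|S_k(\lambda)|$ is small must itself be small. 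I would split according to whether $\lambda$ lies inside or outside the ``bulk'' interval $[-2\sqrt{d-1},2\sqrt{d-1}]$; inside the bulk $|\lambda|\le 2\sqrt d$ is already of the claimed order (modulo a tiny-$\eps$ regime handled by Theorem~\ref{spectral main}, see below), so the real content is a lower bound on $|S_k(\lambda)|$ in terms of $|\lambda|$ for $\lambda$ outside the bulk. Throughout I treat $k$ as a fixed constant and the $O(\cdot)$'s as asymptotic in $d$, so $d$ may be taken larger than any absolute constant.

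The key algebraic step is to put $S_k$ in a form with manifestly nonnegative coefficients. Introduce the ``standard'' monic orthogonal polynomials $Q_t$ with $Q_0=1$, $Q_1=x$ and $Q_t=xQ_{t-1}-(d-1)Q_{t-2}$ for $t\ge 2$ (extended by $Q_{-1}:=0$); a short induction gives $P_t=Q_t-Q_{t-2}$ for every $t\ge 1$, so the sum telescopes to $S_k(x)=Q_k(x)+Q_{k-1}(x)$. Under the substitution $x=z+(d-1)/z$ one has the finite geometric-series identity $Q_t(x)=\sum_{i=0}^{t}(d-1)^i z^{\,t-2i}$, all of whose terms are positive when $z>0$. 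I would verify these two identities by the indicated routine computations and then simply exploit positivity; this presentation is exactly what converts the delicate problem of lower-bounding an oscillatory orthogonal polynomial outside its zero-interval into a one-line domination by the leading term.

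Now fix a nontrivial eigenvalue $\lambda>2\sqrt{d-1}$ and let $z\ge\sqrt{d-1}$ be the larger root of $z+(d-1)/z=\lambda$. Then $(d-1)/z\le\sqrt{d-1}\le z$, hence $\lambda\le 2z$, and keeping only the leading term of the expansion gives $S_k(\lambda)\ge Q_k(\lambda)\ge z^{k}\ge(\lambda/2)^{k}$. Combining with $|S_k(\lambda)|\le\eps\,\mu_{d,k}$ and the crude bound $\mu_{d,k}\le 3d^{k}$ yields $(\lambda/2)^k\le 3\eps d^{k}$, i.e.\ $\lambda=O(\eps^{1/k})\,d$. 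For an eigenvalue $\lambda<-2\sqrt{d-1}$ I would use the parity $Q_t(-x)=(-1)^tQ_t(x)$ to rewrite $S_k(\lambda)=\pm\bigl(Q_k(|\lambda|)-Q_{k-1}(|\lambda|)\bigr)$; grouping the two $z$-expansions term by term (each difference $(d-1)^i z^{\,j}-(d-1)^i z^{\,j-1}$ is $\ge 0$ since $z\ge\sqrt{d-1}\ge 1$) bounds this below by $z^{k-1}(z-1)\ge\tfrac12 z^{k}\ge|\lambda|^{k}/2^{k+1}$ (using $z\ge\sqrt{d-1}\ge 2$ for large $d$), and the same conclusion follows. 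Finally, if $|\lambda|\le 2\sqrt{d-1}$ then $|\lambda|\le 2\sqrt d$, which is $O(\eps^{1/k})d$ whenever $\eps\ge\Omega(d^{-k/2})$; and when $\eps$ is smaller than that, the hypothesis already reads $n\ge\mu_{d,k}-O(d^{k/2})$, so Theorem~\ref{spectral main} gives the even stronger $\lambda(G)=O(\sqrt d)$. Taking the maximum over $|\lambda_2|$ and $|\lambda_n|$ finishes the proof.

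I expect the genuine obstacle to be the second paragraph: spotting the closed form $S_k=Q_k+Q_{k-1}$ and, more importantly, the positive-coefficient presentation $Q_t(z+(d-1)/z)=\sum_i(d-1)^i z^{t-2i}$. Once that is in hand, the outside-the-bulk lower bound is immediate in both sign cases. Everything else is bookkeeping — the parity reduction for negative eigenvalues, the estimate $\mu_{d,k}\le 3d^k$, and the case split that hands the small-$\eps$ range over to Theorem~\ref{spectral main} while being vacuous otherwise.
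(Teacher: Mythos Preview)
Your proof is correct and takes a genuinely different route from the paper's. Both arguments start from Theorem~\ref{eq: eigenvalue bound} and aim to show that $|S_k(\lambda)|\gtrsim|\lambda|^k$ once $|\lambda|$ is large enough, but they extract this lower bound very differently. The paper works with the asymptotic coefficient estimates of Claim~\ref{claim: Geronimus coefficients}/Corollary~\ref{cor:Geronimus}: it assumes $|\lambda|\ge\omega(d^{2/3})$, so that in $\sum_t P_t(\lambda)$ the top monomials $\lambda^k+\lambda^{k-1}$ dominate all the $\Theta(d^i)\lambda^{k-2i}$ corrections, and then reads off $\Theta(|\lambda|^k)\le\eps\mu_{d,k}$. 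You instead telescope $S_k=Q_k+Q_{k-1}$ and use the exact positive expansion $Q_t(z+(d-1)/z)=\sum_{i=0}^{t}(d-1)^i z^{t-2i}$, which gives $S_k(\lambda)\ge(\lambda/2)^k$ for $\lambda>2\sqrt{d-1}$ with no asymptotics at all; the parity trick then handles $\lambda<-2\sqrt{d-1}$, and the bulk case is delegated to Theorem~\ref{spectral main}.

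What each buys: the paper's argument reuses the coefficient machinery already built for Theorem~\ref{spectral main}, so it is short in context but inherits the looseness of those $\Theta$-estimates (in particular its opening line ``if $|\lambda|\le O(d^{2/3})$ then we are done'' tacitly assumes $\eps$ is not too tiny). Your approach is self-contained and exact: the identities $P_t=Q_t-Q_{t-2}$ and the $z$-expansion are one-line inductions, the lower bound comes from dropping nonnegative terms rather than comparing orders of magnitude, and the small-$\eps$ regime is handled cleanly by the explicit handoff to Theorem~\ref{spectral main}. The cost is having to introduce the auxiliary family $Q_t$ and the substitution $x=z+(d-1)/z$, which the paper never needs.
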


Delorme~\cite{miller2005moore, delorme1985grands, delorme1985large} proved the existence of an infinite series of $(d,k)$-graphs whose sizes arbitrarily approach the Moore Bound for diameters $k=3$ and $k=5$. Specifically, Delorme proved that $\liminf_{d\rightarrow\infty} \frac{n_{d,k}}{d^k} = 1,$ for $k=3,5$, where $n_{d,k}$ is the largest possible size of a $(d,k)$-graph. This means that for $k=3,5$, for any constant $\eps > 0$  there is some value $d'$ such that for all $d \geq d'$ there is a $(d,k)$-graph with at least $(1-\eps) \mu_{d,k}$ vertices. Hence, Theorem~\ref{weak-expansion} implies that these graphs are good expanders. 

Bollob{\'a}s conjectured that $(d,k)$-graphs of size $n \geq (1-\eps) d^k$ \emph{always exist} for sufficiently large $d$ and $k$~\cite{bollobas1978extremal}. Delorme's results may be perceived as supporting this conjecture. We point out that proving Bollob{\'a}s' conjecture (or even extending Delorme's results to other specific values of $k$ and $d$) would immediately imply, by Theorem~\ref{weak-expansion}, similar expansion guarantees.
The remainder of this section is devoted to the proofs of Theorems~\ref{eq: eigenvalue bound},~\ref{spectral main}, and~\ref{weak-expansion}.




\subsection{Bounding the nontrivial eigenvalues (proof of Theorem~\ref{eq: eigenvalue bound})}

Our high-level approach to proving Theorem~\ref{eq: eigenvalue bound} is the following: We aim to bound $\lambda(G)$, the second-largest eigenvalue (in absolute value) of the adjacency matrix $A$. We instead consider a different matrix $M$, obtained by employing the \emph{Geronimus Polynomials} as operators over $A$. The combinatorial properties of this class of polynomials allow us to show that $M\textbf{1}_n = (\mu_{d,k} - n)\textbf{1}_n$. Applying the Perron-Frobenius Theorem asserts that this eigenvalue serves as a bound over the entire spectrum of $M$. We then utilize the algebraic relation between both matrices: Namely, we bound $A$'s nontrivial spectrum, using the fact that $M$ shares the same eigenvectors as $A$, and that its eigenvalues may be derived from those of $A$ via an operation of the Geronimus Polynomials. This will then imply Theorem~\ref{eq: eigenvalue bound}.

We begin with the known solution to the recurrence, formulated via a trigonometric expression that holds for all $t>0$~\cite{szeg1939orthogonal}: 

\begin{equation} \label{eq:geronimus}
P_t(2\sqrt{d-1} \cos \theta) = (d-1)^{t/2-1} \frac{(d-1)\sin ((t+1)\theta) - \sin ((t-1)\theta)}{\sin \theta}
\end{equation}

One can easily check that this identity applies for $t=1,2$ and verify that the recurrence relation holds for $t>2$. All roots of $P_t$ are real and lie in the interval $[-2\sqrt{d-1}, 2\sqrt{d-1}]$~\cite{bartal2003metric,linial2002girth}.

Our framework applies the Geronimus Polynomials as operators over the adjacency matrix $A$. This method has several advantages: Algebraically, since $P_t(A)$ is a linear combination of powers of $A$, each eigenvector $v$ of $A$ is an eigenvector of $P_t(A)$ as well. Thus, the spectrum of $P_t(A)$ is given by $spec[P_t(A)] = \{P_t(\lambda)\> |\> \> \lambda \text{ is an eigenvalue of } A \}$. Viewed from a combinatorial perspective, this operation allows us to dismiss backtracking paths from consideration. By \emph{backtracking}, we refer to paths that traverse an edge in both directions consecutively. Note that a non-backtracking path need not be simple (a nontrivial cycle is a typical example of a non-backtracking yet non-simple path). The following claim states this observation formally. The proof is straightforward and is included for completeness.

\begin{claim}\label{claim: non-backtracking}
	Let $A$ be the adjacency matrix of a $d$-regular graph $G$. Then, $P_t(A)$ is the $n \times n$ matrix in which the $(u,v)$'th entry equals the number of non-backtracking paths of length exactly $t$ between $u$ and $v$. 
\end{claim}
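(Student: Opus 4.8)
The plan is to prove the claim by induction on $t$, comparing $P_t(A)$ with the matrix $B_t$ whose $(u,v)$ entry is the number of non-backtracking walks of length exactly $t$ from $u$ to $v$. Since the Geronimus polynomials are specified by $P_0(A)=I$, $P_1(A)=A$, $P_2(A)=A^2-dI$, and $P_t(A)=A P_{t-1}(A)-(d-1)P_{t-2}(A)$ for $t>2$, it suffices to verify that $B_0=I$, $B_1=A$, $B_2=A^2-dI$, and that $B_t=A B_{t-1}-(d-1)B_{t-2}$ for $t>2$. The solution of this recurrence, given its initial data, is unique, so this forces $B_t=P_t(A)$ for all $t$. (Note that by the induction hypothesis $B_{t-1}$ and $B_{t-2}$ are polynomials in $A$, hence commute with $A$; this is what lets us write $B_{t-1}A = A B_{t-1}$ freely, and it is the reason the argument never has to track orientation.)

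\textbf{Base cases.} For $t=0$ the only walk of length $0$ starting at $u$ is the trivial one, so $B_0=I$. For $t=1$ a non-backtracking walk of length $1$ is just an edge, so $B_1=A$. For $t=2$, the number of walks $u\to w\to v$ of length $2$ is $(A^2)_{uv}$, and exactly those with $v=u$ backtrack; since $d$-regularity gives $(A^2)_{uu}=d$, we obtain $B_2=A^2-dI=P_2(A)$.

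\textbf{Inductive step ($t>2$).} Every non-backtracking walk $u=x_0,x_1,\dots,x_t=v$ of length $t$ is obtained uniquely from a non-backtracking walk $x_0,\dots,x_{t-1}$ of length $t-1$ by appending the edge $x_{t-1}x_t$, the only additional requirement being $x_t\neq x_{t-2}$ (otherwise the final step backtracks). Therefore
\[
(B_t)_{uv} \;=\; \sum_{w\,:\,w\sim v}(B_{t-1})_{uw} \;-\; R_t(u,v),
\]
where $R_t(u,v)$ counts non-backtracking walks of length $t-1$ from $u$ to $w$, over all $w\sim v$, whose penultimate vertex is $v$. The first sum is $(B_{t-1}A)_{uv}$. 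For $R_t(u,v)$: a non-backtracking walk of length $t-1$ from $u$ to $w$ with penultimate vertex $v$ is exactly a non-backtracking walk of length $t-2$ from $u$ to $v$ followed by the edge $v\to w$, with the single constraint that $w$ differs from the penultimate vertex of that shorter walk (so that the appended edge does not backtrack); this uses $t-2\ge 1$, so the shorter walk really has a penultimate vertex. Summing over the $d$ neighbors $w$ of $v$, the number of such configurations is $d\cdot(B_{t-2})_{uv}$ minus the number of length-$(t-2)$ walks from $u$ to $v$ paired with their own penultimate vertex; the latter is $(B_{t-2})_{uv}$, since each such walk has precisely one penultimate vertex, necessarily a neighbor of $v$. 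Hence $R_t(u,v)=(d-1)(B_{t-2})_{uv}$, and $B_t=B_{t-1}A-(d-1)B_{t-2}=A B_{t-1}-(d-1)B_{t-2}=P_t(A)$, completing the induction.

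\textbf{Main obstacle.} The only subtle point is the two-level bookkeeping in $R_t(u,v)$, where a ``backtracking at the last step'' is re-expressed in terms of shorter non-backtracking walks that themselves carry a non-backtracking constraint on their final edge. One must also be careful not to run this reduction down to $t=2$: there the length-$(t-2)=0$ walks have no penultimate vertex, the count of $d$ versus $d-1$ copies of $B_0$ breaks, and indeed the ``correction'' is $dI$ rather than $(d-1)I$ — which is exactly why $P_2$ is defined separately as $x^2-d$. Regularity is used in two places (for $(A^2)_{uu}=d$ and for the ``$d$ choices of $w\sim v$'' step), and nothing beyond that is needed.
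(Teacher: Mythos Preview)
Your proof is correct and follows essentially the same inductive approach as the paper: verify the base cases $t=0,1,2$ directly, then show that the matrices of non-backtracking walk counts satisfy the same recurrence as the $P_t(A)$. Your version is more carefully written---the paper handles the inductive step in one sentence by asserting that the ``reducible'' contributions in $A\cdot P_{t-1}(A)$ amount to $(d-1)P_{t-2}(A)$, whereas you spell out the two-level $R_t(u,v)$ bookkeeping and explicitly flag why the argument needs $t>2$---but the underlying idea is identical.
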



\begin{proof}
We use induction on $t$. Note that $P_0(A)=I$, $P_1(A)=A$ and $P_2(A)=A^2-dI$ satisfy the claim. For the induction step, suppose that the claim holds for all Geronimus Polynomials of order strictly less than $t$. Consider the term $A\cdot P_{t-1}(A)$, which corresponds to paths of length $t$ such that the first $t-1$ hops on the path are non-backtracking. Note that reducible paths in this term are those paths that can only be reduced by eliminating their last two arcs and so there must be exactly $(d-1)P_{t-2}(A)$ of them. Being the difference between those quantities, it follows that $P_t(A) = A\cdot P_{t-1}(A) - (d-1)P_{t-2}(A)$ corresponds to the non-backtracking paths.
\end{proof}

As a corollary, the entries of $P_t(A)$ are non-negative for all $t\geq 0$. In addition, as $d(d-1)^{t-1}$ is the number of  non-backtracking paths of length $t>0$ starting from every $v\in G$, this quantity equals the sum of entries in every row of $P_t(A)$. Hence, Claim~\ref{claim: non-backtracking} implies that $P_t(A)\textbf{1}_n = d(d-1)^{t-1} \textbf{1}_n$.

Summing over the indices $0\leq t\leq k$, yields

\begin{equation} \label{eq:ev}
\sum_{t=0}^{k} P_t(A)\mathbf{1}_n = \left(1 + \sum_{t=1}^{k} d(d-1)^{t-1}\right) \textbf{1}_n = \mu_{d,k} \cdot \textbf{1}_n.
\end{equation}\\

We are now ready to prove Theorem \ref{eq: eigenvalue bound}:

\begin{proof} \textbf{(of Theorem \ref{eq: eigenvalue bound})} Given Claim~\ref{claim: non-backtracking}, the sum of matrices $\sum_{t=0}^{k} P_t(A)$ corresponds to all non-backtracking paths of length at most $k$. Since $G$ is of diameter $k$, this sum of matrices must consist of strictly positive entries, and can thus be represented as $\sum_{t=0}^{k} P_t(A) = J + M$, 
where $J$ is the all ones matrix and $M$ is non-negative. We now have
$
M\mathbf{1}_n = \left(\sum_{t=0}^{k} P_t(A)-J\right)\mathbf{1}_n = \left(\mu_{d,k} - n\right)\mathbf{1}_n
$,
where the second equality is due to~\eqref{eq:ev}. 

Recall that $A$ is symmetric and thus diagonalizable w.r.t.~an orthogonal basis. Therefore, any eigenvector $v\neq \textbf{1}_n$ must lie in $(span\{\mathbf{1}_n\})^\perp$. Since $J\textbf{1}_n=n\textbf{1}_n$ and $rank(J)=1$, it follows that $Jv=0$. Hence,
$
Mv = \left(\sum_{t=0}^{k} P_t(A)-J\right)v = \sum_{t=0}^{k} P_t(A)v 
=\sum_{t=0}^{k} P_t(\lambda)v.
$	

This implies in particular that 
$spec(M) = \left\{\sum_{t=0}^{k} P_t(\lambda)\> |\> \> \lambda \text{ is a nontrivial eigenvalue of } A \right\}\cup\{\mu_{d,k} - n\}$.
We now apply the Perron-Frobenius Theorem (see~\cite{meyer2000matrix}), which states that a non-negative matrix admits a non-negative eigenvector with a non-negative eigenvalue that is larger or equal, in absolute value, to all other eigenvalues. Now, since $\textbf{1}_n$ is the only non-negative eigenvector of $M$, we conclude that $\mu_{d,k} - n$ is the leading eigenvalue of $M$ and the claim follows.
\end{proof}

\subsection{Proof of Theorem \ref{spectral main}}

Our proof of Theorem \ref{spectral main} utilizes a careful asymptotic estimation of the Geronimus Polynomials' coefficients. When $\lambda(G)$ is of order larger than $\sqrt{d}$, our analysis asserts that $\left|\sum_{t=0}^{k} P_t(\lambda)\right|$ must be larger then $O(d^{k/2})$ for some nontrivial eigenvalue $\lambda$ of $G$, thus resulting in a contradiction to Theorem~\ref{eq: eigenvalue bound}.

For our purposes, it will be beneficial to use the representation $P_t(x) = \sum_{i=0}^{t} a_{t,i} x^i$, where $a_{t,i}$ is the $i$'th coefficient of the $t$'th Geronimous Polynomial. We note the following: (i) $P_t$ is either odd or even\footnote{A polynomial $q(x)$ is said to be even if $q(x)=q(-x)$ and odd if $q(-x)=-q(x)$.} for all $t>0$, and the parity of $P_t$ equals the parity of $t$. This can be shown either by induction using the recurrence relation, or straightforward from the solution~\eqref{eq:geronimus}; (ii) A comparison of the leading coefficients in the recurrence implies that $a_{t,t} = a_{t-1,t-1}$. Applying the boundary conditions $(a_{1,1}=a_{0,0}=1)$ yields $a_{t,t} =1$ for all $t>0$; (iii) Setting $\theta=\frac{\pi}{2}$ in~\eqref{eq:geronimus} yields $a_{t,0} = d(d-1)^{t/2-1}(-1)^{t/2}$ whenever $t$ is even.

The following easy-to-prove claim provides us with asymptotic estimates for the rest of the coefficients.  Note that the $\Theta(\cdot)$ notation is hiding factors of $t$ (we will only use this claim only where $t$ is constant). 

\begin{claim}\label{claim: Geronimus coefficients}
	Let $P_t(x) = \sum_{i=0}^{t} a_{t,i} x^i$ denote the Geronimous polynomial of order $t$, then 
	\[ 
	a_{t,i} = \left\{\begin{matrix}
	(-1)^{\frac{t-i}{2}} \Theta\left(d^{\frac{t-i}{2}}\right) & \mbox{if } (t-i) \; \text{is even}\\
	0 & \mbox{if } (t-i) \; \text{is odd}  
	\end{matrix}\right.
	\]
	for all $0\leq i\leq t$.
\end{claim}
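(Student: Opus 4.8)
The plan is to prove Claim~\ref{claim: Geronimus coefficients} by induction on $t$, reading the recurrence $P_t(x) = xP_{t-1}(x) - (d-1)P_{t-2}(x)$ coefficient by coefficient. In terms of coefficients this says $a_{t,i} = a_{t-1,i-1} - (d-1)a_{t-2,i}$ for all $i$ (with the convention that out-of-range coefficients are zero). The parity statement — that $a_{t,i} = 0$ when $t-i$ is odd — is immediate from observation (i) in the text (parity of $P_t$ equals parity of $t$), so the content is the size estimate when $t-i$ is even. I would first dispose of the base cases $t = 0, 1, 2$ by direct inspection ($P_0 = 1$, $P_1 = x$, $P_2 = x^2 - d$), and also record the already-established facts that $a_{t,t} = 1$ (observation (ii)) and $a_{t,0} = d(d-1)^{t/2-1}(-1)^{t/2}$ for even $t$ (observation (iii)), which serve as the boundary anchors of the induction.

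For the inductive step, fix $i$ with $t - i$ even and positive, and write $a_{t,i} = a_{t-1,i-1} - (d-1)a_{t-2,i}$. Note $t-i$ even forces $(t-1)-(i-1) = t-i$ even and $(t-2)-i = t-i-2$ even, so both terms on the right are nonzero and the induction hypothesis applies to each: $a_{t-1,i-1} = (-1)^{(t-i)/2}\Theta(d^{(t-i)/2})$ and $a_{t-2,i} = (-1)^{(t-i-2)/2}\Theta(d^{(t-i-2)/2}) = (-1)^{(t-i)/2-1}\Theta(d^{(t-i)/2 - 1})$. Hence $(d-1)a_{t-2,i} = (-1)^{(t-i)/2 - 1}\Theta(d^{(t-i)/2})$, so $-(d-1)a_{t-2,i} = (-1)^{(t-i)/2}\Theta(d^{(t-i)/2})$, and the two terms have the \emph{same sign} $(-1)^{(t-i)/2}$ and the \emph{same order} $d^{(t-i)/2}$. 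Adding two quantities of matching sign and matching order yields a quantity of that sign and that order (no cancellation can occur), giving $a_{t,i} = (-1)^{(t-i)/2}\Theta(d^{(t-i)/2})$ as claimed.

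The one genuine subtlety — and the step I expect to require the most care — is the claim that there is no catastrophic cancellation when summing the two contributions, i.e. that the $\Theta$ lower bound survives. This is why the sign bookkeeping above matters: once one checks that both summands carry the identical sign $(-1)^{(t-i)/2}$, their magnitudes add rather than subtract, so the lower bound of the sum is at least the lower bound of either summand. For this to be a clean induction one should carry the sign explicitly in the hypothesis (as the statement of the claim already does via the factor $(-1)^{(t-i)/2}$) rather than just an absolute-value bound; with the sign tracked, the argument is a routine two-line estimate at each step. Since $t$ is treated as a constant throughout (the $\Theta(\cdot)$ absorbs $t$-dependent factors, as the remark preceding the claim notes), there are no issues with the implied constants degrading as the induction proceeds. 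Finally, I would remark that the edge case $i = t$ is consistent since $a_{t,t} = 1 = (-1)^0\Theta(d^0)$, and $i$ with $t-i$ odd is handled entirely by the parity observation, so all ranges of $i$ are covered.
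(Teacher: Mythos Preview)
Your proposal is correct and follows essentially the same approach as the paper's proof: both handle the odd-parity case via observation~(i), set up the same base cases, and for the inductive step read the recurrence as $a_{t,i} = a_{t-1,i-1} - (d-1)a_{t-2,i}$ and verify that the two contributions carry the common sign $(-1)^{(t-i)/2}$ and common order $\Theta(d^{(t-i)/2})$, so they add constructively. If anything, your write-up is slightly more explicit than the paper's about why the sign tracking prevents cancellation and about the out-of-range convention at the boundary $i=0$.
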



%
	

The proof of Claim \ref{claim: Geronimus coefficients} is provided in Appendix \ref{app: alg proofs}. This immediately gives the next corollary, which is just a slightly easier to use formulation of $P_t(x)$.

\begin{corollary} \label{cor:Geronimus}
The Geronimus Polynomial of order $t$ can be written as
\[
P_t(x) = \sum_{i=0}^{\lfloor \frac{t}{2}\rfloor} (-1)^i \cdot \Theta (d^i) \cdot x^{t-2i}.
\]    
\end{corollary}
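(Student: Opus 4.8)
The plan is to obtain Corollary~\ref{cor:Geronimus} from Claim~\ref{claim: Geronimus coefficients} by a simple change of summation index, exploiting the fact that half of the coefficients $a_{t,i}$ vanish. This is essentially immediate, so the ``proof'' is really just careful bookkeeping of indices.

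First I would start from the defining expansion $P_t(x) = \sum_{i=0}^{t} a_{t,i} x^i$ and discard every term with $t-i$ odd, since Claim~\ref{claim: Geronimus coefficients} asserts $a_{t,i}=0$ for exactly those indices. The surviving indices are those of the form $i = t-2j$; the admissible range is $0 \le j \le \lfloor t/2 \rfloor$, where the lower bound $j \ge 0$ comes from $i \le t$ and the upper bound comes from $i \ge 0$ together with the fact that $i$ must have the same parity as $t$ (so $i$ decreases in steps of $2$ down to $0$ or $1$).

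Next I would substitute $i = t-2j$ into the coefficient formula: since $t-i = 2j$ is even, Claim~\ref{claim: Geronimus coefficients} gives $a_{t,t-2j} = (-1)^{(t-i)/2}\,\Theta\!\left(d^{(t-i)/2}\right) = (-1)^{j}\,\Theta(d^{j})$. Plugging this back in, $P_t(x) = \sum_{j=0}^{\lfloor t/2\rfloor} (-1)^{j}\,\Theta(d^{j})\, x^{t-2j}$, and relabeling $j$ as $i$ yields exactly the claimed expression.

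There is no genuine obstacle here: the only points to verify are the parity argument that lets us drop the odd-gap terms and the exact range of the reindexed sum; all the asymptotic content is inherited verbatim from Claim~\ref{claim: Geronimus coefficients}, and (as the $\Theta$ already hides factors of $t$) no additional bounds on the constants are needed.
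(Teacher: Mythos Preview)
Your proposal is correct and matches the paper's approach exactly: the paper treats this corollary as immediate from Claim~\ref{claim: Geronimus coefficients}, and your argument---dropping the odd-gap coefficients and reindexing via $j=(t-i)/2$---is precisely the bookkeeping that justifies that immediacy. There is nothing to add.
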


We are now ready to apply this machinery. The following lemma bounds the value of these polynomials on values which are ``small".

\begin{claim} \label{cl:ev-large}
	Let $\frac{1}{2} < \alpha \leq 1$, and let $|\lambda| = \Theta(d^{\alpha})$.  Then $|P_t(\lambda)| = \Theta(d^{t\alpha})$.
\end{claim}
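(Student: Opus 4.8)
The plan is to use the formulation of $P_t$ from Corollary~\ref{cor:Geronimus}, which writes
\[
P_t(\lambda) = \sum_{i=0}^{\lfloor t/2 \rfloor} (-1)^i \Theta(d^i) \lambda^{t-2i},
\]
and show that the $i=0$ term dominates all others when $|\lambda| = \Theta(d^\alpha)$ with $\alpha > 1/2$. First I would estimate the magnitude of the $i$-th summand: it has absolute value $\Theta(d^i) \cdot |\lambda|^{t-2i} = \Theta(d^i \cdot d^{\alpha(t-2i)}) = \Theta(d^{\alpha t} \cdot d^{i(1-2\alpha)})$. Since $\alpha > 1/2$ we have $1 - 2\alpha < 0$, so the exponent $i(1-2\alpha)$ is strictly decreasing in $i$; hence the $i=0$ term, of size $\Theta(d^{\alpha t})$, strictly dominates every subsequent term by a multiplicative factor that is a positive power of $d$.

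The remaining subtlety is that the claim asserts a \emph{lower} bound $|P_t(\lambda)| = \Theta(d^{t\alpha})$, not merely an upper bound, so I need to rule out cancellation. Because the leading ($i=0$) term is $\Theta(d^{\alpha t})$ while the sum of all the remaining terms is, by the triangle inequality, at most $O(d^{\alpha t} \cdot d^{1-2\alpha}) = o(d^{\alpha t})$ (using $t$ constant, so the number of terms is bounded), the total is $\Theta(d^{\alpha t})$: the dominant term cannot be killed off by lower-order corrections once $d$ is large enough. I would phrase this as: there is a constant $c>0$ (depending on $t$) and a threshold $d_0$ such that for $d \geq d_0$, $|P_t(\lambda)| \geq c\, d^{\alpha t}$, and similarly an upper bound of the same order, which together give the $\Theta$ statement.

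The only place requiring a little care is making the hidden constants in Corollary~\ref{cor:Geronimus} and in the hypothesis $|\lambda| = \Theta(d^\alpha)$ interact correctly — specifically, that the gap $d^{2\alpha - 1} \to \infty$ eventually beats the ratio of the (bounded, $t$-dependent) implied constants between the leading term and the subleading terms. Since $t$ is treated as a constant throughout this section, the number of terms and all implied constants are bounded, so this is routine; I expect no real obstacle, just bookkeeping of the asymptotics. The main conceptual step is simply the observation that $\alpha > 1/2$ is exactly the regime in which $d^i$ growth in the coefficients loses to $|\lambda|^{2i} = d^{2\alpha i}$ decay in the powers of $\lambda$.
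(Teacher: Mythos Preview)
Your proposal is correct and matches the paper's approach: both apply Corollary~\ref{cor:Geronimus}, compute the magnitude of the $i$-th summand as $\Theta(d^{t\alpha + i(1-2\alpha)})$, and conclude that for $\alpha > 1/2$ the $i=0$ term dominates. The paper nominally phrases the argument as an induction on $t$, but its inductive step never invokes the inductive hypothesis and is exactly your direct computation; your version is in fact more explicit about ruling out cancellation among the alternating terms.
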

\begin{proof}
	We use induction on $t$. For $t=0$ we have that $P_0(\lambda) = 1 = \Theta(d^{0\alpha})$, and for $t=1$ we have that $|P_1(\lambda)| = |\lambda| = |\Theta(d^{1\alpha})|$. 
    	Assume that the claim holds for the Geronimus Polynomials of order less than $t$. Using Corollary~\ref{cor:Geronimus}, we now have
    	\begin{align*}
    	    P_t(\lambda) &= \sum_{i=0}^{\lfloor \frac{t}{2}\rfloor} (-1)^i \cdot \Theta (d^i) \cdot \lambda^{t-2i} = \sum_{i=0}^{\lfloor \frac{t}{2}\rfloor} (-1)^i \cdot \Theta (d^i) \cdot \Theta(d^{\alpha(t-2i)}) = \sum_{i=0}^{\lfloor \frac{t}{2}\rfloor} (-1)^i \cdot \Theta (d^{t\alpha + i(1-2\alpha)}). 
    	\end{align*}
 
	Whenever $\alpha > \frac{1}{2}$, the absolute value of this equals $\Theta(d^{t\alpha})$ as claimed. 
\end{proof}

The proof of the theorem follows directly.

\begin{proof}\textbf{(of Theorem~\ref{spectral main})} Suppose that $A$ obtains an eigenvalue  $\lambda = \Theta(d^\alpha)$ for some $\alpha > \frac{1}{2}$. Then, applying Claim \ref{cl:ev-large}, we have:
\begin{align*}
\left|\sum_{t=0}^{k} P_t(\lambda)\right| &\geq |P_k(\lambda)| - \sum_{t=0}^{k-1} |P_t(\lambda)| =  \Theta(d^{k\alpha}) - \sum_{t=0}^{k-1} \Theta(d^{t\alpha})= \Theta(d^{k\alpha})
\end{align*}

This expression, however, is upper bounded by $\mu_{d,k} - n$ (by Theorem \ref{eq: eigenvalue bound}), which is $O(d^{k/2})$ by the assumption of the Theorem~\ref{spectral main}. We thus have
$\Theta(d^{k \alpha}) \leq \mu_{d,k} - n \leq O(d^{k/2})$, 
and this is, of course, a contradiction to the assumption $\alpha > \frac{1}{2}$. We therefore conclude that $\lambda=O(\sqrt{d})$.	
\end{proof}

\subsection{Proof of Theorem~\ref{weak-expansion}} The proof of Theorem~\ref{weak-expansion} relies on some of the ideas introduced in the proof of Theorem~\ref{spectral main}.  Let $\lambda$ be a nontrivial eigenvalue of $G$.  We wish to show that $|\lambda| \leq O(\eps^{1/k}) d$. If $|\lambda| \leq O(d^{2/3})$ then we are done. Suppose, then, that $|\lambda| \geq \omega(d^{2/3})$, and hence $d = o(|\lambda|^{3/2})$.  Consider the sum $| \sum_{t=0}^k P_t(\lambda)|$.  Corollary~\ref{cor:Geronimus}, and the discussion which showed that $a_{t,t} = 1$, imply that this sum is at least
\begin{align*}
\left| \sum_{t=0}^k P_t(\lambda) \right| &\geq |\lambda^k + \lambda^{k-1}| - \sum_{i=1}^{\lfloor k/2 \rfloor} \left(\Theta(d^i) |\lambda|^{k-2i} + \Theta(d^i) |\lambda|^{k-2i-1}\right) \\
& \geq |\lambda^k + \lambda^{k-1}| - \sum_{i=1}^{\lfloor k/2 \rfloor} \left(\Theta(|\lambda|^{k-(i/2)}) + \Theta(|\lambda|^{k-1 - (i/2)})\right) \geq \Theta(|\lambda^k|),
\end{align*}
where the second inequality follows from the assumption that $|\lambda| \geq \omega(d^{2/3})$. 

When we plug this into Theorem~\ref{eq: eigenvalue bound}, we get that $\Theta(|\lambda^k|) \leq \mu_{d,k} - n \leq \eps \mu_{d,k}$.  Since $\mu_{d,k} \leq c d^k$ for some constant $c$, we get that $c' |\lambda^k| \leq \eps c d^k$ for some constants $c$ and $c'$, and hence 
$|\lambda| \leq \left(\frac{c}{c'}\right)^{1/k} \eps^{1/k} d$,
proving the theorem. \qed

\section{Diameter \emph{vs.}~Combinatorial Expansion} \label{coarse}

We present below our results for combinatorial expansion. We first point out that applying the Cheeger inequality~\cite{hoory2006expander} to our bounds on spectral expansion immediately implies bounds on combinatorial expansion. Specifically, the Cheeger inequality states that $h_e(G) \geq \frac{d-\lambda_2}{2}$. When combined with Theorems~\ref{spectral main} and~\ref{weak-expansion}, this yields the following bounds.

\begin{theorem}
    Let $G$ be a $(d,k)$ graph with $n$ vertices, for some constant $k>0$. If $n \geq \mu_{d,k} - O(d^{k/2})$ then $h_e(G) \geq \frac{d - O(\sqrt{d})}{2}$.
\end{theorem}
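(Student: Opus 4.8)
The plan is to combine the spectral bound from Theorem~\ref{spectral main} with the Cheeger inequality stated at the start of Section~\ref{coarse}. This is a one-line argument: under the hypothesis $n \geq \mu_{d,k} - O(d^{k/2})$, Theorem~\ref{spectral main} gives $\lambda_2 \leq \lambda(G) = O(\sqrt{d})$, and then the Cheeger inequality $h_e(G) \geq \frac{d - \lambda_2}{2}$ immediately yields $h_e(G) \geq \frac{d - O(\sqrt{d})}{2}$.

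More explicitly, I would first recall that the second-largest adjacency eigenvalue $\lambda_2$ satisfies $\lambda_2 \leq \max\{|\lambda_2|, |\lambda_n|\} = \lambda(G)$ by definition. Then I would invoke Theorem~\ref{spectral main}, whose hypothesis ($G$ is a $(d,k)$-graph with $k$ constant and $n \geq \mu_{d,k} - O(d^{k/2})$) is exactly the hypothesis of the present theorem, to conclude $\lambda(G) = O(\sqrt{d})$, hence $\lambda_2 = O(\sqrt{d})$. Finally, substituting into $h_e(G) \geq \frac{d - \lambda_2}{2}$ gives the claim.

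There is essentially no obstacle here; the content of the theorem is entirely carried by Theorem~\ref{spectral main}, and the proof is a routine invocation of the Cheeger inequality, which the paper has already stated may be used. The only mild subtlety is the sign/absolute-value bookkeeping — the Cheeger inequality uses $\lambda_2$ (which could a priori be as low as $-d$, but that only helps), whereas Theorem~\ref{spectral main} bounds $\lambda(G) = \max\{|\lambda_2|,|\lambda_n|\}$ — but since $\lambda_2 \leq \lambda(G) = O(\sqrt d)$, nothing delicate happens.

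\begin{proof}
    By definition, $\lambda_2 \leq \max\{|\lambda_2|, |\lambda_n|\} = \lambda(G)$. Since $G$ is a $(d,k)$-graph with $k$ constant and $n \geq \mu_{d,k} - O(d^{k/2})$, Theorem~\ref{spectral main} gives $\lambda(G) = O(\sqrt{d})$, and hence $\lambda_2 = O(\sqrt{d})$. Combining this with the Cheeger inequality $h_e(G) \geq \frac{d - \lambda_2}{2}$ yields $h_e(G) \geq \frac{d - O(\sqrt{d})}{2}$, as claimed.
\end{proof}
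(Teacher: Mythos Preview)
Your proposal is correct and matches the paper's own argument exactly: the paper states this theorem as an immediate consequence of combining Theorem~\ref{spectral main} with the Cheeger inequality $h_e(G) \geq \frac{d-\lambda_2}{2}$, which is precisely what you do.
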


\begin{theorem}
    Let $G$ be a $(d,k)$ graph with $n$ vertices, for some constant $k>0$. If $n \geq (1-\eps) \mu_{d,k}$ then $h_e(G) \geq \frac{(1-O(\eps^{1/k}))d}{2}$.
\end{theorem}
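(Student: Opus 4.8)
The plan is to derive this statement as an immediate corollary of Theorem~\ref{weak-expansion} together with the discrete Cheeger inequality, exactly in parallel with the preceding theorem (the additive-gap version, which combines Theorem~\ref{spectral main} with Cheeger). Recall that Cheeger's inequality for $d$-regular graphs states $h_e(G) \geq \frac{d-\lambda_2}{2}$, where $\lambda_2$ is the second-largest eigenvalue of the adjacency matrix $A$.

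First I would note that, by definition, $\lambda(G) = \max\{|\lambda_2|,|\lambda_n|\} \geq \lambda_2$, so that $d - \lambda_2 \geq d - \lambda(G)$. Next, since $G$ is a $(d,k)$-graph with $n \geq (1-\eps)\mu_{d,k}$ and $k$ is a constant, Theorem~\ref{weak-expansion} applies and gives $\lambda(G) \leq O(\eps^{1/k})\,d$. Substituting, $d - \lambda_2 \geq d - O(\eps^{1/k})\,d = \bigl(1 - O(\eps^{1/k})\bigr)d$. Plugging this into Cheeger's inequality yields
\[
h_e(G) \;\geq\; \frac{d-\lambda_2}{2} \;\geq\; \frac{\bigl(1-O(\eps^{1/k})\bigr)d}{2},
\]
which is the claimed bound.

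The only point needing a word of care is that Theorem~\ref{weak-expansion} controls $\lambda(G)$, the largest nontrivial eigenvalue in absolute value, while Cheeger's inequality only requires an upper bound on $\lambda_2$ itself; since $\lambda_2 \leq \lambda(G)$, the bound transfers without loss. I do not expect any genuine obstacle: all the substantive work is already contained in the proof of Theorem~\ref{weak-expansion} (and hence ultimately in Theorem~\ref{eq: eigenvalue bound} and the asymptotic analysis of the Geronimus polynomials), so this theorem is essentially a one-line consequence.
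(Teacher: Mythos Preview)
Your proposal is correct and matches the paper's approach exactly: the paper derives this theorem as an immediate consequence of applying the Cheeger inequality $h_e(G)\geq \frac{d-\lambda_2}{2}$ to the spectral bound of Theorem~\ref{weak-expansion}, precisely as you outline. Your remark that $\lambda_2 \leq \lambda(G)$ is the only detail needed to connect the two statements, and it is handled correctly.
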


Observe that, since clearly $d/2$ is an upper bound on $h_e(G)$, both of these bounds imply very high expansion guarantees when $n$ is very close to the Moore Bound. However, when this is not so, e.g., when $n = \mu_{d,k} / k$, neither bound yields nontrivial expansion guarantees.

To provide stronger expansion guarantees for graphs that do not come very close (additively/ multiplicatively) to the Moore Bound, we analyze combinatorial expansion directly. We next present our bounds for edge and vertex expansion in undirected and directed graphs. We discuss the implications of these expansion bounds for known $(d,k)$-graph constructions in Table~\ref{implications table} and in Appendix~\ref{sec: implications}.

\noindent{\bf Undirected graphs.} Our main result of this section is the following: 

\begin{theorem} \label{coarse ee}
	Let $G=(V,E)$ be a $d$-regular graph of size $n$ and diameter $k$. If $n=\alpha\cdot\mu_{d,k}$, then 
	\[
	h_e(G)\geq\frac{\alpha d}{2k}\cdot\left(1-\frac{1}{(d-1)^k}\right)\quad \text{and} \quad \phi_V(G)\geq \frac{\alpha}{2(k-1) +\alpha}.
	\]
\end{theorem}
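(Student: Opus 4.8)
The plan is to prove both bounds via direct counting arguments based on a single structural fact: since $G$ has diameter $k$, every vertex $v$ reaches the entire graph within $k$ steps, so balls of radius $k$ cover $V$, and this forces sets with small boundary to be themselves small (too small to exist if they are supposed to be nearly half the graph). I would package this as a "ball-growth" lemma: fix a set $S$ with $|S| \le n/2$, and let $S = S_0 \subseteq S_1 \subseteq \cdots$ where $S_{j+1} = S_j \cup N(S_j)$ is the $j$-th vertex-neighborhood expansion of $S$. Since the diameter is $k$, after at most $k$ steps we must reach all of $V$ (more precisely, once $S_j \ne V$ we can keep growing; the subtlety is that a single set need not reach $V$ in $k$ steps starting from an arbitrary seed, so I would instead seed from a single vertex of $S$, or argue via complements — see below).

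For \emph{vertex expansion}: suppose toward contradiction that $\phi_V(G) < \frac{\alpha}{2(k-1)+\alpha}$, witnessed by $S$ with $0 < |S| \le n/2$ and $|N(S)| < \frac{\alpha}{2(k-1)+\alpha}|S|$. The idea is that if $S$ has tiny vertex boundary then so do the iterated neighborhoods $S_j$: one shows inductively $|S_{j+1}| - |S_j| = |N(S_j)| $ grows slowly, so $|S_k|$ stays bounded by roughly $\big(1 + (k-1)\cdot\frac{\text{something}}{1}\big)|S|$. Meanwhile, since $G$ has diameter $k$, picking $v \in S$ we have $B_k(v) = V$, hence $|S_k| \ge |B_k(v) \cap \text{(reachable from }S\text{)}| $; more cleanly, the ball of radius $k$ around any fixed $v\in S$ is all of $V$, and this ball is contained in $S_k$ only if $v\in S$ — so $|S_k| = n = \alpha\mu_{d,k}$. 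Combining the upper bound on $|S_k|$ in terms of $|S|$ (which uses $|S| \le n/2$ and the assumed small expansion at each of the $k$ steps, being careful that $|S_j|\le n/2$ might fail partway through, at which point we switch to counting via the complement) with $|S_k| = n$ gives $\alpha\mu_{d,k} \le (2(k-1)+\alpha)|S| / \alpha \cdot (\dots)$, and since $|S|\le n/2 = \alpha\mu_{d,k}/2$ this yields the stated bound on $\phi_V$. The factor $2(k-1)+\alpha$ strongly suggests the argument splits the $k$ expansion steps into two halves (radius-$k$ ball around a vertex of $S$ together with radius-$k$ ball around a vertex of $S^c$ — or a "middle layer" argument), with the $2(k-1)$ accounting for $2$ endpoints each contributing $k-1$ intermediate growth steps and the $+\alpha$ from the seed sets themselves.

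For \emph{edge expansion}: here I would instead count edges crossing out of $S$ directly. Let $S$ with $|S|\le n/2$. Grow $S_0 = S \subseteq S_1 \subseteq \cdots \subseteq S_k$ as above; since $G$ has diameter $k$, $S_k$ contains a ball of radius $k$ about some vertex, which is all of $V$, so $\sum_{j=0}^{k-1}(|S_{j+1}|-|S_j|) = n - |S| \ge n/2 = \frac{\alpha}{2}\mu_{d,k}$. Hence some layer $j^\star$ satisfies $|S_{j^\star+1}| - |S_{j^\star}| \ge \frac{\alpha\mu_{d,k}}{2k}$, i.e., the "shell" $T = S_{j^\star+1}\setminus S_{j^\star}$ has $|T| \ge \frac{\alpha\mu_{d,k}}{2k}$. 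Every vertex of $T$ has a neighbor in $S_{j^\star} \supseteq S$, and one relates edges from $T$ back to $S_{j^\star}$ to edges leaving $S$. This is where I expect to spend care: directly, $e(S,S^c)$ need not see the edges into a far shell $T$, so the cleaner route is probably to bound $h_e$ via the observation that the shell $S_1\setminus S_0 = N(S)$ already carries $\ge |N(S)| \ge |e(S,S^c)|/d$ worth of... — actually the right move is: each of the $k$ shells $N(S_{j})\setminus S_j$ is nonempty with total size $\ge n/2$, and the number of edges leaving $S = S_0$ is at least (edges from $S_0$ to $S_1\setminus S_0$) $\ge |S_1\setminus S_0|$, combined with a regularity/averaging step giving the $(1 - (d-1)^{-k})$ correction from the geometric-series form of $\mu_{d,k}$.

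The main obstacle, and the step I would write most carefully, is handling the case where the iterated neighborhood $S_j$ crosses the $n/2$ threshold before $k$ steps: the clean inductive "small boundary $\Rightarrow$ small set" reasoning only applies while $|S_j|\le n/2$, and one must switch to the complement $S^c$ (which also has small boundary, by $d$-regularity, when $S$ does) and run the same growth argument from there, with the two halves meeting in the middle — this is exactly what produces the $2(k-1)$ rather than $k-1$ in the denominator of the vertex-expansion bound. Getting the bookkeeping of this "meet in the middle" exactly right, so that the constants come out as stated, is the crux.
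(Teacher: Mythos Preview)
Your proposal has a genuine gap: the ball-growth/BFS approach, as you describe it, does not close. For edge expansion you yourself flag the problem --- edges leaving a far shell $S_{j^\star}$ have no direct relationship to $e(S,S^c)$ --- and the pivot you sketch (``each of the $k$ shells is nonempty with total size $\ge n/2$'') still only bounds $|N(S)|$, not $|e(S,S^c)|$, and does not produce the factor $\frac{\alpha d}{2k}(1-(d-1)^{-k})$. For vertex expansion, the step ``if $S$ has tiny vertex boundary then so do the iterated neighborhoods $S_j$'' is unjustified: $\phi_V(G)$ being small only tells you that \emph{some} set has small expansion, not that the particular sets $S_1,S_2,\dots$ do. If instead you use $\phi_V(G)$ as a \emph{lower} bound on the expansion of every $S_j$ with $|S_j|\le n/2$, you get multiplicative growth $|S_j|\ge(1+\phi)^j$, which leads to the familiar ``expanders have logarithmic diameter'' inequality, not the linear-in-$k$ denominator $2(k-1)+\alpha$ you are aiming for.

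The paper's argument is not a layering argument at all; it is a double-counting of paths. Fix a cut $(S,S^c)$ with $|S|=s\le n/2$. There are $s(n-s)$ ordered pairs in $S\times S^c$, and since the diameter is $k$, each pair is joined by a path of length $\le k$ that must cross the cut. For edge expansion, every such path uses at least one edge of $e(S,S^c)$; one then upper-bounds the number of length-$\le k$ paths through a fixed edge by $\sum_{l=1}^k l(d-1)^{l-1}\le k(d-1)^k/(d-2)$, which forces $|e(S,S^c)|\ge s(n-s)(d-2)/(k(d-1)^k)$ and gives the stated bound after substituting $n=\alpha\mu_{d,k}$. For vertex expansion, one counts the $s(n-s-x)$ pairs in $S\times(S^c\setminus N(S))$; each connecting path has length $\ge 2$ and must have an \emph{internal} vertex in $N(S)$, and the number of such paths through a fixed internal vertex is bounded similarly. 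The $2(k-1)$ in the denominator comes from this path-through-a-vertex count (roughly $(k-1)\cdot d(d-1)^{k-1}$, with the Moore bound contributing the matching $(d-1)^k$), not from any meet-in-the-middle symmetry. This path-counting idea is what your proposal is missing.
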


Our proof of Theorem~\ref{coarse ee} utilizes a counting argument. As the graph has diameter $k$, each pair of vertices on opposite sides of a cut must be connected via a path of length at most $k$ that traverses the boundary. However, there is an upper bound, induced by the degree and diameter of the graph, on the number of such paths that traverse a given edge/vertex. A careful examination of the implications of these two limitations provides us with a lower bound on the size of the boundary. The proof appears in Appendix~\ref{app:undirected}. 

\noindent{\bf Directed graphs.} We consider directed graphs next. We begin by introducing the relevant terminology and notation. We say that a directed graph (a.k.a.~digraph) $G=(V,E)$ is $d$-regular if both the out-degree and the in-degree of each vertex equals $d$. A cut in a digraph $e(S,S^c)=\{(u,v)\in E\space|u\in S,v\in S^c\}$ is asymmetric, and consists of all edges directed from $S$ to $S^c$. The diameter is still defined as the maximal distance between two vertices, and the corresponding Moore Bound is only slightly different (as there are potentially $d^i$ vertices of distance $i$ from a given vertex):
$\tilde{\mu}_{d,k} = \sum_{i=0}^{k} d^i = \frac{d^{k+1}-1}{d-1}$.

The following result is the directed analogue of Theorem~\ref{coarse ee}, and is proved in Appendix~\ref{app:directed}.

\begin{theorem} \label{coarse digraph}
	Let $G$ be a $d$-regular, $k$-diameter \textbf{directed} graph of size $n=\alpha \cdot \tilde{\mu}_{d,k}$, then
	\[h(G)\geq \frac{\alpha}{2k} \left(d-\frac{1}{d^k}\right) \quad \text{and} \quad \phi_V(G)\geq \frac{\alpha \cdot d}{2(d+1)(k-1)+\alpha \cdot d}.\] 
\end{theorem}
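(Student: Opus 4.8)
\textbf{Plan of proof for Theorem~\ref{coarse digraph}.}

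The plan is to mimic the counting argument used for the undirected case (Theorem~\ref{coarse ee}), adjusting for the asymmetry of cuts and for the fact that a $d$-regular digraph has at most $d^i$ vertices at distance $i$ from any fixed vertex. Fix a set $S$ with $|S| \leq n/2$, write $\bar S = S^c$, and consider the (directed) cut $e(S,\bar S)$. The key observation is the same as in the undirected setting: since $G$ has diameter $k$, for every ordered pair $(u,v)$ with $u \in S$ and $v \in \bar S$ there is a directed path of length at most $k$ from $u$ to $v$, and this path must use at least one edge of $e(S,\bar S)$ (it must cross from $S$ to $\bar S$ at some point, and the first such crossing edge lies in $e(S,\bar S)$). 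So the number of (ordered) pairs in $S \times \bar S$ that are ``served'' is $|S| \cdot |\bar S| \geq |S| \cdot (n/2)$, and each is served by some edge of the cut. I would then upper bound, for a fixed edge $a = (x,y) \in e(S,\bar S)$, the number of pairs $(u,v) \in S \times \bar S$ such that a shortest $u \to v$ path of length $\leq k$ can be routed through $a$: the path decomposes as (path of length $\leq k-1$ from $u$ into $x$) $\cdot$ $a$ $\cdot$ (path of length $\leq k-1$ from $y$ to $v$), so by the digraph Moore-type count the number of candidate sources $u$ is at most $\tilde\mu_{d,k-1} = \sum_{i=0}^{k-1} d^i$ and likewise for targets $v$. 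Dividing the total demand by the per-edge capacity gives $|e(S,\bar S)| \geq \frac{|S|(n/2)}{(\tilde\mu_{d,k-1})^2}$, and then substituting $n = \alpha \tilde\mu_{d,k}$ and simplifying the ratio $\tilde\mu_{d,k}/(2(\tilde\mu_{d,k-1})^2)$ should yield $h(G) \geq \frac{\alpha}{2k}(d - d^{-k})$; the algebra here is routine given $\tilde\mu_{d,k} = \frac{d^{k+1}-1}{d-1}$ and $\tilde\mu_{d,k-1} = \frac{d^k - 1}{d - 1}$, together with the crude bound $\tilde\mu_{d,k-1} \leq k d^{k-1}$.

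For the vertex expansion bound, I would argue analogously but count through an intermediate \emph{vertex} rather than an edge. For $S$ with $|S| \leq n/2$, let $N(S)$ be the out-neighborhood in $\bar S$. Every $u \to v$ path with $u \in S$, $v \in \bar S$ passes through some vertex of $N(S)$ (the first vertex on the path that lies outside $S$). Fix $w \in N(S)$; I count the pairs $(u,v) \in S \times \bar S$ routable through $w$ via a shortest path of length $\leq k$. Writing the path as (length-$\leq (k-1)$ path from $u$ to $w$) followed by (length-$\leq (k-1)$ path from $w$ to $v$), the number of sources $u$ is at most $\tilde\mu_{d,k-1}$ and the number of targets $v$ is at most $\tilde\mu_{d,k-1}$; but one must be careful not to double-count $w$ itself and to handle the degenerate case $u \in N(S)$ or the path of length $0$, which is where the $(d+1)$ factor in the denominator comes from (roughly, a ``budget'' argument comparing $\tilde\mu_{d,k-1}^2$ against $|N(S)|$ times something, then using $n = \alpha \tilde\mu_{d,k}$ and the identity $\tilde\mu_{d,k} = 1 + d\,\tilde\mu_{d,k-1} + (d^2 - d)\tilde\mu_{d,k-2}$-type relations). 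After plugging $n = \alpha \tilde\mu_{d,k}$, rearranging an inequality of the form $|N(S)| \cdot (\text{stuff}) \geq |S| \cdot (\text{other stuff})$ into the standard shape $\frac{|N(S)|}{|S|} \geq \frac{\alpha d}{2(d+1)(k-1) + \alpha d}$ is again bookkeeping.

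I expect the main obstacle to be getting the constants in the denominators exactly right — in particular, correctly accounting for the endpoints and the length-$0$/short-path degeneracies so that the per-vertex capacity comes out as $(d+1)(k-1)$-ish rather than $k^2$-ish, and making sure the ``first crossing edge/vertex'' argument is stated so that each served pair is charged to exactly one cut edge (or one boundary vertex). The undirected proof in Appendix~\ref{app:undirected} presumably already contains the template for this charging scheme, including how it splits the path at the boundary and bounds the two halves; the work here is to re-run it with $d^i$ in place of $d(d-1)^{i-1}$ for the ball sizes and with ordered pairs in place of unordered ones, and to verify that the resulting closed-form matches the claimed expressions via the digraph Moore bound identities. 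None of the individual steps looks deep, but the directed asymmetry makes the pair-counting slightly more delicate than in the symmetric case, so that is where I would focus the care.
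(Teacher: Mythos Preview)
Your overall skeleton (charge each ordered pair in $S\times S^c$ to a crossing arc, then divide demand by per-arc capacity) matches the paper's, but the capacity bound you wrote down is wrong, and the error is not cosmetic. You bound the number of pairs routable through a fixed arc $a=(x,y)$ by $(\tilde\mu_{d,k-1})^2$, treating the prefix length $i$ (from $u$ to $x$) and the suffix length $j$ (from $y$ to $v$) as independent, each at most $k-1$. But the constraint is $i+j+1\le k$, i.e.\ $i+j\le k-1$, so the correct per-arc capacity is
\[
\sum_{i+j\le k-1} d^{i}d^{j}\;=\;\sum_{l=1}^{k} l\,d^{\,l-1}\;=\;f_d(k)\;\le\;\frac{k\,d^{k}}{d-1},
\]
which is exactly what the paper uses. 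Your quantity $(\tilde\mu_{d,k-1})^2=\sum_{i,j\le k-1}d^{i+j}$ is larger by a factor of order $d^{k-2}/k$, and the ``routine algebra'' you announce does not go through: with your capacity one gets $h_e(G)\ge \alpha\tilde\mu_{d,k}/\bigl(2(\tilde\mu_{d,k-1})^2\bigr)\approx \alpha/(2k^2 d^{k-2})$, not $\frac{\alpha}{2k}(d-d^{-k})$. Once you replace $(\tilde\mu_{d,k-1})^2$ by $f_d(k)$, the computation is the one in the paper:
\[
h_e(G)\;\ge\;\frac{n-s}{f_d(k)}\;\ge\;\frac{n}{2}\cdot\frac{d-1}{k\,d^{k}}\;=\;\frac{\alpha(d^{k+1}-1)}{2k\,d^{k}}\;=\;\frac{\alpha}{2k}\Bigl(d-\frac{1}{d^{k}}\Bigr).
\]

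The vertex-expansion part has the same defect, and your speculation about where the $(d+1)$ comes from is off. The paper does not charge through the first boundary vertex; it restricts to pairs $(u,v)\in S\times(S^c\setminus N(S))$ (so $s(n-s-x)$ pairs), each of which is joined by a path of length $2\le l\le k$ having some \emph{inner} vertex in $N(S)$. For a fixed $w$ and a fixed inner position in a length-$l$ path there are $d^{l}$ such paths, and there are $l-1$ inner positions, so the per-vertex capacity is
\[
\sum_{l=2}^{k}(l-1)\,d^{l}\;\le\;\frac{d+1}{d-1}\,(k-1)\,d^{k},
\]
and rearranging $s(n-s-x)\le x\cdot\frac{d+1}{d-1}(k-1)d^{k}$ with $n=\alpha\tilde\mu_{d,k}$ gives the stated $\phi_V(G)\ge \frac{\alpha d}{2(d+1)(k-1)+\alpha d}$. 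That inequality on $\sum_{l=2}^{k}(l-1)d^{l}$ is where the factor $(d+1)(k-1)$ actually originates, not from any length-$0$ or endpoint degeneracy.
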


\noindent{\bf Refined results for low-diameter graphs.} Much research on constructing low-diameter graphs focuses on diameters $2$ and $3$ (see, e.g.,~\cite{miller2005moore,erdos1962problem,mckay1998note}). Graphs of very low diameter are particularly important from a practical perspective~\cite{besta2014slim,kim2008technology,kim2009cost}. The following theorems improve upon our results for the edge expansion and vertex expansion of $(d,k)$-graphs.

\begin{theorem} \label{2-diam ee}
	Let $G=(V,E)$ be an undirected $(d,2)$-graph of size $n=\alpha \cdot d^2$. Then 
    $$h_e(G)\geq \frac{2d+1-\sqrt{4(1-\alpha)d^2+4d+1}}{4}.$$
\end{theorem}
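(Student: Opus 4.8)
The plan is to exploit the diameter-$2$ condition to force a lower bound on the number of edges crossing any cut. Fix a set $S$ with $|S| \le n/2$, write $s = |S|$, and let $E(S, S^c)$ be the set of boundary edges. The key observation is that every vertex $v \in S^c$ lies within distance $2$ of every vertex $u \in S$; in particular, if $v \in S^c$ is \emph{not} adjacent to any vertex of $S$, then each $u \in S$ must reach $v$ through a common neighbor, so $v$ must be adjacent to a neighbor of every such $u$. I would instead count more globally: consider the vertices of $S^c$ that have a neighbor in $S$ (call this set $N(S) \subseteq S^c$) versus those that do not (call this set $R = S^c \setminus N(S)$). Each $v \in R$ must be joined to each vertex of $S$ by a path of length exactly $2$, whose midpoint lies in $N(S)$ (the midpoint cannot lie in $S$, as then $v$ would be adjacent to $S$, nor in $R$, for the same reason). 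So every $v \in R$ sends all $d$ of its edges into $N(S)$, and moreover $N(S)$ must be "rich" enough that the $d$-neighborhoods of its vertices, intersected with $S$, cover $S$ in a way compatible with reaching every $u \in S$.

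The cleaner route, which I expect to give the stated bound, is to count pairs. Let $b = |E(S,S^c)|$. First, $|N(S)| \le b$ since each vertex of $N(S)$ absorbs at least one boundary edge. Now count paths of length $2$ from $S$ to $S^c$ with midpoint in $S^c$: on one hand, every one of the $s \cdot |S^c| \ge s \cdot n/2$... — more carefully, every ordered pair $(u,v)$ with $u \in S$, $v \in R$ needs such a path, giving at least $s|R|$ required $2$-paths (with midpoint in $N(S)$); on the other hand, the number of $2$-paths $u \to w \to v$ with $u\in S$, $w \in N(S)$, $v \in S^c$ is at most $\sum_{w \in N(S)} \deg_S(w)\cdot d \le d \sum_{w\in N(S)} \deg_S(w) = d\,b$. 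Hence $s|R| \le d b$, i.e. $|R| \le db/s$. Combining with $|N(S)| \le b$ and $|S^c| = n - s = |N(S)| + |R|$, I get $n - s \le b + db/s$, so $b \ge \frac{s(n-s)}{s+d}$. Then I would minimize the right-hand side: substituting $n = \alpha d^2$ and dividing by $s$, $h_e(G) \ge \min_{1 \le s \le n/2} \frac{n-s}{s+d}$, which is decreasing in $s$, so the minimum over the feasible range is attained at $s = n/2$ — but that gives a weak bound, so the actual argument must instead bound $b/s$ directly as a function of $s$ and optimize the quantity $\frac{n-s}{s+d}$ over all $s \ge 1$, or more likely refine the counting to avoid the worst case. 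The square-root form $\frac{2d+1-\sqrt{4(1-\alpha)d^2+4d+1}}{4}$ strongly suggests that the true inequality is quadratic in $s$: something like $b \ge s$ trivially combined with $b \ge \frac{s(n-s)}{s + d} - (\text{correction})$ yielding $4h^2 - (2d+1)\cdot 2h + (n - \text{stuff}) \le 0$ after setting $h = b/s$ at the minimizing $s$. Concretely, I would aim to derive an inequality of the shape $h_e(G)\big(h_e(G) + \text{const}\big) \le$ (something linear in $d$ and $n$) by a sharper pair-counting that tracks $\deg_S(w)$ more carefully (using $\sum_w \deg_S(w) = b$ and Cauchy–Schwarz or convexity to bound $\sum_w \deg_S(w)^2$), then solve the resulting quadratic.

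The main obstacle is getting the counting tight enough to produce the exact square-root expression rather than a weaker rational bound; this requires being careful about (i) midpoints of $2$-paths possibly lying in $S$ as well, (ii) the distinction between $R$ and $N(S)$, and (iii) extracting the quadratic in $h_e$ by optimally choosing the worst-case $S$ (I expect the extremal $S$ to be one where the bound $|N(S)| \le b$ and the $2$-path count are simultaneously tight, which pins down a relationship between $s$ and $b$ forcing the quadratic). I would also double-check the edge cases $\alpha$ close to $1$ (where the bound should approach $d/2$, matching the generic Cheeger-type bound and the Moore-graph intuition) and small $\alpha$ (where it should degrade gracefully), to confirm the algebra.
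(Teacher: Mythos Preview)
Your first counting argument, which yields $b \ge \dfrac{s(n-s)}{s+d}$ and hence $h_e(G) \ge \dfrac{n-s}{s+d}$, is correct but far too weak: at $s=n/2$ it gives only a bound of order $1$, not of order $d$. You recognize this and gesture at a refinement involving $\sum_w \deg_S(w)^2$ and Cauchy--Schwarz, but the proposal never identifies the counting that actually produces the quadratic in $x=|e(S,S^c)|$, and the framework you set up (midpoints in $N(S)$, pairs with endpoint in $R$) is not the one that gets there.

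The missing idea is to count \emph{all} length-$2$ paths with one endpoint in $S$ and one in $S^c$, organized by the cut edge they use rather than by the midpoint vertex. Every such $2$-path uses exactly one cut edge; if that edge is $\{u,v\}$ with $u\in S$, $v\in S^c$, then the path is either $u'-u-v$ with $u'\in S$ (there are $d-a_u$ choices, where $a_w$ denotes the number of cut edges incident to $w$) or $u-v-v'$ with $v'\in S^c$ (there are $d-a_v$ choices). Summing over cut edges gives exactly
\[
2dx - \sum_{u\in S} a_u^2 - \sum_{v\in S^c} a_v^2 \ \le\ 2dx - \frac{x^2}{s} - \frac{x^2}{n-s},
\]
the inequality by Cauchy--Schwarz on each side. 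Since $s(n-s)-x$ cross pairs are not joined by an edge and hence require a $2$-path, one obtains
\[
s(n-s) - x \ \le\ x\Bigl(2d - \tfrac{x}{s} - \tfrac{x}{n-s}\Bigr),
\]
which rearranges to the quadratic $\dfrac{n}{s(n-s)}x^2 - (2d+1)x + s(n-s) \le 0$. Solving for $x$ and dividing by $s$ gives exactly the stated bound after using $n-s \ge n/2$ and $n=\alpha d^2$. Your $N(S)/R$ decomposition only captures the $u-v-v'$ paths and only a subset of the required pairs; that is why it loses a factor of $d$ and cannot be pushed to the quadratic without essentially redoing the count in the symmetric edge-based way above.
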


\begin{theorem} \label{2-diam ve}
	Let $G=(V,E)$ be an undirected $(d,2)$-graph of size $n= \alpha\cdot d^2$.  Then
	$\phi_V(G)\geq \frac{2\alpha}{2\alpha+1}$.
\end{theorem}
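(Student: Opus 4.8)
The plan is to prove the vertex-expansion bound $\phi_V(G) \ge \frac{2\alpha}{2\alpha+1}$ for a $(d,2)$-graph on $n = \alpha d^2$ vertices via a direct counting argument, analogous to the technique used for Theorem~\ref{coarse ee} but specialized to diameter $2$. Fix a set $S$ with $0 < |S| \le n/2$, write $s = |S|$, and let $N = N(S)$ be its vertex boundary. The key observation is that since $G$ has diameter $2$, every vertex in $V \setminus S$ that is \emph{not} in $N(S)$ still must be reachable from any vertex of $S$ by a path of length at most $2$; such a path necessarily has its middle vertex in $N(S)$. So I would count pairs $(u, w)$ with $u \in S$ and $w \in V \setminus (S \cup N)$ together with a length-$2$ path $u \to x \to w$, where necessarily $x \in N$.

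First I would set up the double count. On one hand, each such $w \in V \setminus (S \cup N)$ must be hit from \emph{every} $u \in S$, so the number of (pair, path) triples is at least $s \cdot |V \setminus (S \cup N)| = s(n - s - |N|)$, since each pair needs at least one connecting length-$2$ path. On the other hand, I bound the number of such triples from above by summing over the intermediate vertex $x \in N$: for a fixed $x \in N$, the number of length-$2$ paths $u \to x \to w$ with $u \in S$ and $w \notin S \cup N$ is at most $(\deg x - ?) \cdot (\text{number of neighbors of } x \text{ outside } S\cup N)$, but cruder and sufficient is simply $\le \deg(x)^2 = d^2$ — actually I want the tighter count that a path through $x$ uses one edge into $S$ and one edge out to $V\setminus(S\cup N)$, giving at most $d \cdot d = d^2$ per vertex of $N$, but more carefully one edge of $x$ goes to $u\in S$ and a \emph{different} edge goes to $w$, so at most $d(d-1)$; I would check which bound makes the arithmetic come out to $\frac{2\alpha}{2\alpha+1}$ and use that one. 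This yields an inequality of the form $s(n - s - |N|) \le |N| \cdot (\text{something like } d^2 \text{ or } d(d-1))$.

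Then I would solve for $|N|$. Rearranging gives $|N| \big(s + d^2\big) \ge s(n-s)$, hence $\phi_V(G) \ge \frac{|N|}{s} \ge \frac{n-s}{s + d^2}$. Using $s \le n/2$ so that $n - s \ge n/2$, and $n = \alpha d^2$ so $d^2 = n/\alpha$, this becomes $\phi_V(G) \ge \frac{n/2}{n/2 + n/\alpha} = \frac{1}{1 + 2/\alpha} = \frac{\alpha}{\alpha + 2}$. That is not quite the claimed $\frac{2\alpha}{2\alpha+1}$, which tells me the per-vertex path bound needs to be sharper — in particular I should not waste the degree-$d$ edge of $x$ going \emph{into} $N$ itself, and I should account for the fact that $x \in N$ means $x$ already has a neighbor in $S$, or alternatively count paths landing in $V\setminus S$ (including back into $N$) rather than only into $V \setminus(S\cup N)$, giving the better denominator. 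So the main obstacle, and where I would spend the real effort, is getting the counting bookkeeping exactly right: carefully deciding whether to count paths into $V\setminus S$ or into $V\setminus(S\cup N)$, whether the middle-vertex bound is $d$, $d-1$, $d^2$, or $d(d-1)$, and possibly splitting edges of $S$ incident to $N$ versus internal to $S$, so that the final rearrangement produces the denominator $2\alpha + 1$ rather than $\alpha + 2$. Once the correct counting inequality $s(n - s) \le |N| \cdot (s + \tfrac12 d^2)$ (or whatever the precise form is) is established, the algebra with $s \le n/2$ and $n = \alpha d^2$ is routine and delivers $\phi_V(G) \ge \frac{2\alpha}{2\alpha+1}$.
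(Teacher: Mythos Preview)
Your overall approach matches the paper's exactly: fix $S$ with $|S|=s\le n/2$, set $x=|N(S)|$, and double-count length-$2$ paths from $S$ to $S^c\setminus N(S)$ through middle vertices in $N(S)$. The gap is precisely the step you flag as the ``main obstacle'': you never identify the correct per-middle-vertex bound, and both of your guesses are too weak. With $d^2$ or $d(d-1)$ you get $\frac{\alpha}{\alpha+2}$ (as you computed), and with your conjectured $\tfrac12 d^2$ you would only get $\frac{\alpha}{\alpha+1}$, still short of the target.

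The missing idea is an AM--GM/product bound. For a fixed middle vertex $v\in N(S)$ with $a$ neighbors in $S$ and $b$ neighbors in $S^c\setminus N(S)$, the number of relevant length-$2$ paths through $v$ is $ab$. Since $S$ and $S^c\setminus N(S)$ are disjoint subsets of $v$'s neighborhood, $a+b\le d$, hence $ab\le (d/2)^2$. This gives
\[
s(n-s-x)\;\le\; x\cdot\frac{d^2}{4},\qquad\text{i.e.}\qquad s(n-s)\;\le\; x\Bigl(\tfrac{d^2}{4}+s\Bigr),
\]
and then $\phi_V(G)\ge \dfrac{n-s}{d^2/4+s}\ge \dfrac{n/2}{d^2/4+n/2}=\dfrac{2\alpha}{2\alpha+1}$. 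None of your proposed refinements (subtracting one for the edge into $S$, excluding edges into $N$, or splitting edges of $S$) produces the needed factor of $4$; the quadratic product bound is essential.
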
 

We can extend our analysis to graphs of diameter $3$, yielding the following theorem.

\begin{theorem} \label{3-diam ve}
	Let $G=(V,E)$ be a $(d,3)$-graph of size $n=\alpha\cdot d^3$, then
	$\phi_V(G)\geq \frac{\alpha}{\alpha+1}$.
\end{theorem}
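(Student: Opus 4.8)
The plan is to refine the boundary-counting argument behind Theorem~\ref{coarse ee} in the same way that Theorem~\ref{2-diam ve} refines it for diameter~$2$, replacing crude path counts by an AM--GM estimate. Fix a set $S$ with $0<|S|\le n/2$ and write $B=N(S)$, $T=V\setminus(S\cup B)$. Since $\frac{\alpha}{\alpha+1}<1$, we may assume $|B|<|S|$ and $T\neq\emptyset$, as otherwise $|N(S)|/|S|\ge 1$ already. The goal is to bound the number $|S|\cdot|T|$ of ordered pairs in $S\times T$ from above in terms of $|B|$ and $d$, and then combine this with the identity $|T|=n-|S|-|B|$.

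First I would record the structural facts forced by $\mathrm{diam}(G)=3$: there are no edges between $S$ and $T$; every vertex of $T$ has all its neighbours in $B\cup T$; and every pair $(s,t)\in S\times T$ is at distance exactly $2$ or $3$. A distance-$2$ pair has a common neighbour, which must lie in $B$. For a distance-$3$ pair I would fix one shortest path $s,x_1,x_2,t$; then $x_1\in S\cup B$ and $x_2\in B\cup T$, and if $x_2\in T$ then necessarily $x_1\in B$ (since $x_2$'s neighbour $x_1$ lies in $B\cup T$ on one hand and in $S\cup B$ on the other). So each pair either is ``captured'' by a common neighbour in $B$, or has a fixed shortest path with $x_2\in B$, or has one with $x_1\in B$.

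The heart of the argument is a charging scheme: assign each pair $(s,t)$ to a single vertex $v\in B$ --- the common neighbour in the distance-$2$ case, the vertex $x_2$ when $x_2\in B$, and the vertex $x_1$ when $x_2\in T$ --- and bound the total charge at each $v$. Writing $\sigma_v=|N(v)\cap S|$ and $\tau_v=|N(v)\cap T|$ (so $\sigma_v+\tau_v\le d$), a direct count gives: at most $\sigma_v\tau_v$ from distance-$2$ pairs; at most $(d-1)\,\tau_v(d-\tau_v)$ from the $x_2\in B$ pairs (choose $t\in N(v)\cap T$, then $x_1\in N(v)\cap(S\cup B)$, then $s\in N(x_1)\cap S$, the last factor being at most $d-1$ because $x_1$ already has the neighbour $v\notin S$); and at most $(d-1)\,\sigma_v\tau_v$ from the $x_2\in T$ pairs. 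Summing over $v\in B$ yields $|S|\cdot|T|\le\sum_{v\in B}\bigl(d\,\sigma_v\tau_v+(d-1)\,\tau_v(d-\tau_v)\bigr)$, and using $\sigma_v\le d-\tau_v$ together with $\tau_v(d-\tau_v)\le d^2/4$ bounds each summand by $\tfrac{d^2(2d-1)}{4}\le\tfrac{d^3}{2}$, so $|S|\cdot|T|\le\tfrac{d^3}{2}|B|$. Substituting $|T|=n-|S|-|B|$ gives $|B|\ge\frac{|S|(n-|S|)}{d^3/2+|S|}$, and a short calculation using $|S|\le n/2$ and $n=\alpha d^3$ shows the right-hand side is at least $\frac{n}{n+d^3}|S|=\frac{\alpha}{\alpha+1}|S|$, which is exactly the claim (and explains why the natural normalization here is $d^3$ rather than $\mu_{d,3}$).

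I expect the main obstacle to be the distance-$3$ bookkeeping: verifying that every distance-$3$ pair falls into exactly one sub-case once a shortest path is fixed, and --- more importantly --- that the ``$d-1$'' (rather than a naive ``$d$'') is legitimately available in each path count, since it is precisely this saving that makes each summand land at $\le\tfrac{d^3}{2}$, which the final arithmetic needs and which yields the improvement over the general bound $\frac{\alpha}{2(k-1)+\alpha}$ of Theorem~\ref{coarse ee}. A minor point to get right is fixing one shortest path per pair so the charging is well defined and no pair is double-counted.
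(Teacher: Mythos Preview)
Your proof is correct and follows essentially the same approach as the paper: both count pairs in $S\times T$ by routing them through $N(S)$, separate distance-$2$ and distance-$3$ pairs, use the AM--GM-type bound $\tau_v(d-\tau_v)\le d^2/4$, and arrive at the per-boundary-vertex bound $\tfrac{d^2(2d-1)}{4}$, which you relax to $\tfrac{d^3}{2}$ (the paper keeps the slightly sharper $\tfrac{d^3}{2}-\tfrac{d^2}{4}$ but states the same final bound). The only cosmetic difference is organization: the paper phrases the distance-$3$ count as ``each length-$3$ $S$--$T$ path contains a length-$2$ subpath with middle in $N(S)$ and ends on opposite sides of the cut, and there are at most $2(d-1)$ extensions of each such subpath,'' whereas you implement the same count via an explicit per-vertex charging scheme with the case split on whether $x_2\in B$ or $x_2\in T$.
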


\section{Conclusion and Open Questions}

We revisited the classical question of relating the expansion and the diameter of graphs and showed that not only do good expanders exhibit low diameter but the converse is also, in some sense, true. We also discussed the implications of our results for constructions from the rich body literature on low-diameter graphs. We leave the reader with many interesting open questions, including: (1) {\bf Tightening the gaps.} An obvious open question is improving upon our lower bounds and establishing upper bounds on the expansion of fixed-diameter graph constructions. 
(2) {\bf Benchmarking against the optimal (largest possible) $(d,k)$-graph.} We used the Moore Bound as a benchmark. Another approach would be to compare against the size of the largest possible $(d,k)$-graph. (3) {\bf Geronimus Polynomials vs.\ Hashimoto's non-backtracking operator.} The operation of the Geronimus Polynomials over the adjacency matrix of the graph offers a new perspective on its non-backtracking paths (as established in Lemma \ref{claim: non-backtracking}). This suggests a non-trivial relation between these polynomials and Hashimoto's non-backtracking operator, which we believe is of independent interest and may find wider applicability.

\subsubsection*{Acknowledgements}
	
We wish to thank Nati Linial, Alex Samorodnitsky, Elchanan Mossel, and Noga Alon for fruitful discussions.

\newpage
\bibliographystyle{plain}
\bibliography{diam}
	
\newpage 
\section*{Appendix}

\appendix

\section{Proofs for Algebraic Expansion}\label{app: alg proofs}

\begin{proof}(of Claim \ref{claim: Geronimus coefficients})
	The parity of $P_t$ implies that $a_{t,i} = 0$ whenever $t-i$ is odd. Hence, it suffices to consider the case of an even difference, which we prove by induction on pairs $(t,i)$ with $i \leq t$.  Note that $a_{t,t} = 1$ for all $t$ as claimed, and moreover that the claim holds for $(t,i)$ where $t \leq 2$ by construction.  This forms the base case of our induction. 
	
	Now consider some $(t,i)$ with $t \geq 3$ and $i < t$.  It is easy to see from the recurrence relation defining $P_t$ that  $a_{t,i} = a_{t-1,i-1} - (d-1) a_{t-2,i}$, and so by induction we get that 
	\begin{align*}
	    a_{t,i} &= a_{t-1,i-1} - (d-1) a_{t-2, i} \\
	    &= (-1)^{\frac{t-1 - (i-1)}{2}} \Theta(d^{\frac{t-1 - (i-1)}{2}}) - (d-1)((-1)^{\frac{t-2-i}{2}} \Theta(d^{\frac{t-2-i}{2}})) \\
	    & = (-1)^{\frac{t-i}{2}} \Theta(d^{\frac{t-i}{2}}) - (-1)^{\frac{t-2-i}{2}} \Theta(d^{\frac{t-i}{2}}) \\
	    &= (-1)^{\frac{t-i}{2}} \Theta(d^{\frac{t-i}{2}}) + (-1)^{\frac{t-i}{2}} \Theta(d^{\frac{t-i}{2}}) \\
	    &= (-1)^{\frac{t-i}{2}} \Theta(d^{\frac{t-i}{2}})
	\end{align*}
	as claimed.
\end{proof}

\section{Proofs for Combinatorial Expansion}

\subsection{Undirected Graphs} \label{app:undirected}

\begin{proof} (of Theorem \ref{coarse ee}) We use a counting argument. Let $(S,S^c)$ be a cut in the graph, and let $|S|=s\leq \frac{n}{2}$. As the diameter equals $k$, every pair of vertices that lie on both sides of the cut must be connected via a path of length at most $k$. We thus have $s(n-s)$ such paths, each of which passes through some edge in the cut.
	
	How many paths of length at most $k$ include a given edge $e\in E$? As $G$ is $d$-regular, there are at most $(d-1)^{l-1}$ paths of length $l$ for which $e$ is in the $i$'th position in the path. It follows that no more than $l\cdot (d-1)^{l-1}$ paths of length $l$ use a specific edge, hence the number of paths of length at most $k$ that utilize a fixed edge is upper bounded by
	\[
	f_{d-1}(k)=\sum_{l=1}^k l\cdot (d-1)^{l-1}.
	\]
	
	Let us find a simpler formulation of $f_{d-1}(k)$. Integrating yields 
	\[
	F_{d-1}(k)=\sum_{l=1}^k (d-1)^{l}=\frac{(d-1)^{k+1}-1}{(d-1)-1}.
	\]

	Differentiating brings us back to 
	\begin{align*}
  	f_{d-1}(k) &=\frac{(k+1)(d-1)^k(d-2)-[(d-1)^{k+1}-1]}{(d-2)^2} \\
	&\leq \frac{(k+1)(d-1)^k(d-2)-(d-1)^{k}(d-2)}{(d-2)^2} \\
	&= \frac{k(d-1)^{k}}{(d-2)}
    \end{align*}
    
	Now, $s(n-s)$ paths use the cut, and every edge in the cut can be a part of at most $f_{d-1}(k)$ paths. It follows that $|e(S,S^c)|\geq \frac{s(n-s)}{f_{d-1}(k)}$ for every cut $(S,S^c)$ in $G$. Hence, the cut that realizes $h_e(G)$ satisfies
	
	\begin{align*}
	h_e(G) &= \frac{|e(S,S^c)|}{|S|} 
	\geq \frac{s(n-s)(d-2)}{s\cdot k(d-1)^{k} }
	\geq \frac{n}{2}\cdot \frac{(d-2)}{k(d-1)^{k} } \\
	&=\frac{\alpha d \cdot ((d-1)^k-1)}{2(d-2)}\cdot \frac{(d-2)}{k(d-1)^{k}} \\
	&= \frac{\alpha d}{2k} \cdot \left(1-\frac{1}{(d-1)^k}\right). \qedhere
    \end{align*}
	
	A similar argument applies for the vertex expansion.  Let $S\subset V$ be a subset of size $s\leq \frac{n}{2}$, and let $x=|N(S)|$ denote the size of its outer boundary. Then $|S|\cdot |S^c\setminus N(S)|=s(n-s-x)$
	pairs of vertices are connected via a path of length $2\leq l\leq k$ in which one of the inner vertices of the path is in $N(S)$.  But how many paths of this form can there be?
	
	A path of length $l$ consists of $(l-1)$ possible positions for an inner vertex. As there are $d(d-1)^{l-1}$ paths of length $l$ passing through a vertex in a fixed position, we conclude that the number of paths is at most 
	\begin{align*}
	\sum_{l=2}^{k} x(l-1)\cdot d(d-1)^{l-1} &= xd\sum_{l=2}^{k} \left(l(d-1)^{l-1}-(d-1)^{l-1}\right) \\
	&= xd \left(f_{d-1}(k)-1-\frac{(d-1)^k-(d-1)}{d-2}\right) \\
	&\leq xd\left(\frac{k(d-1)^{k}}{(d-2)}-1-\frac{(d-1)^k-(d-1)}{d-2}\right) \\
	&\leq \frac{xd}{d-2}\left((k-1)(d-1)^k+1\right) \\
	&\leq \left(\frac{d+1}{d-2}\right)\cdot x(k-1)(d-1)^k.
	\end{align*}
	
	As the number of paths must exceed the number of pairs connected by a path, it follows that 
	\[
	s(n-s-x)\leq \left(\frac{d+1}{d-2}\right)\cdot x(k-1)(d-1)^k,
	\]
	and thus, the cut that realizes $\phi_V(G)$ satisfies
	
	\begin{align*}
	\phi_V(G) &\geq \frac{x}{s} \geq 
	\frac{n-s}{(\frac{d+1}{d-2}) (k-1)(d-1)^k+s} \\
	&\geq \frac{0.5n}{(\frac{d+1}{d-2}) (k-1)(d-1)^k+0.5n} \\
	&\geq \frac{\alpha \cdot \mu_{d,k}}{2(\frac{d+1}{d-2})(k-1)(d-1)^k +\alpha \cdot \mu_{d,k}} \\
	&\geq \frac{\alpha d\cdot \frac{(d-1)^k-1}{d-2}}{2(\frac{d+1}{d-2})(k-1)(d-1)^k +\alpha \cdot d\cdot \frac{(d-1)^k-1}{d-2}} \\
	&\geq \frac{\alpha}{2(k-1) +\alpha}. \qedhere
	\end{align*}
\end{proof}

\begin{proof} (of Theorem \ref{2-diam ee}) Let $(S,S^c)$ be a cut in the graph with $|S|=s\leq \frac{n}{2}$. Let $x=|e(S,S^c)|$ denote the number of edges in the cut. There are $s(n-s)$ pairs of vertices such that one vertex is in $S$ and the other is in $S^c$, and exactly $x$ of these pairs are connected by a path of length $1$. Hence $s(n-s)-x$ such pairs are connected by a path of length $2$.
	
	How many such length-$2$ paths are there? For each vertex $v\in V$, let $a_v$ denote the number of edges in the cut that are incident on $v$. Given an edge $\{u,v\} \in e(S,S^c)$, the number of length-$2$ paths that use this edge with both ends in different sides of the cut is $(d-a_u)+(d-a_v)$. It follows that the total number of length-$2$ paths with both ends in different sides of the cut is exactly
	\[
	\sum_{\{u,v\} \in e(S,S^c)} ((d-a_u)+(d-a_v))
	= 2dx -	\sum_{\{u,v\}\in e(S,S^c)} (a_u+a_v).
	\]
	
	As every vertex $u\in V$ contributes exactly $a_u$ summands to the sum, the expression above equals
	\[
	2dx - \left(\sum_{u\in S} a_u ^2 + \sum_{u\in S^c} a_u ^2\right).
	\]
	
	Since $\sum_{u\in S} a_u =\sum_{v\in S^c} a_v=x$, this expression is maximized (by employing, e.g., Cauchy-Schwarz inequality) whenever $a_u=\frac{x}{s}$ for all $u\in S$ and $a_v=\frac{x}{n-s}$ for all $v\in S^c$. That is, it is maximized when the edges in the cut are spread evenly between all vertices from the every side of the cut. It follows that the total number of length-$2$ paths which cross the cut is at most 
	\[
	2dx-s\left(\frac{x}{s}\right)^2-(n-s)\left(\frac{x}{n-s}\right)^2
	= x\left(2d-\frac{x}{s}-\frac{x}{n-s}\right).
	\]
	
	This number of paths should connect $s(n-s)-x$ pairs of vertices from both sides of the cut. We thus get that 
    $s(n-s)-x\leq x\left(2d-\frac{x}{s}-\frac{x}{n-s}\right).$
	Rearranging terms yields
	$x^2\cdot \frac{n}{s(n-s)} -x(2d+1)+s(n-s)\leq 0.$
	
	It follows that
	\[
	\frac{2d+1-\sqrt{(2d+1)^2-4n}}{2\frac{n}{s(n-s)}}\leq x \leq \frac{2d+1+\sqrt{(2d+1)^2-4n}}{2\frac{n}{s(n-s)}}.
	\]
	
	This means that the size of the cut is bounded from below and from above. In order to lower bound the edge expansion, we only need to use the inequality on the left:
	\[
	h_e(G) = \frac{|e(S,S^c)|}{|S|} = \frac{x}{s}
	\geq \frac{2d+1-\sqrt{(2d+1)^2-4n}}{2\left(\frac{n}{n-s}\right)} \\
	\geq \frac{2d+1-\sqrt{(2d+1)^2-4\alpha\cdot d^2}}{4}. \qedhere
	\]
\end{proof}

\subsection{Directed Graphs} \label{app:directed}

\begin{proof} (of Theorem \ref{coarse digraph}) We follow the steps of the analysis in the undirected case, starting with edge expansion. As before, let $(S, S^c)$ be the cut that realizes $h_e(G)$ with $|S| = s \leq n/2$.  The number of paths of length at most $k$ that utilize a specific edge is now bounded by
	\[f_d(k) = \sum_{l=1}^k l\cdot d^{l-1} \leq \frac{k\cdot d^{k}}{(d-1)},\]
	and since $|e(S,S^c)|\geq \frac{s(n-s)}{f_{d}(k)}$, we have
	\begin{align*}
	h_e(G) &= \frac{|e(S,S^c)|}{|S|} 
	\geq \frac{s(n-s)(d-1)}{s\cdot k\cdot d^{k} }
	\geq \frac{n}{2}\cdot \frac{(d-1)}{kd^{k} }
	= \frac{\alpha \cdot \mu_{d,k}(d-1)}{2k\cdot d^k} \\
	&= \frac{\alpha \cdot (d^{k+1}-1)}{2k\cdot d^k}.
	\end{align*}
	
	For the vertex expansion, we again consider a cut $(S, S^c)$ with $|S| = s \leq n/2$ and $x = |N(S)|$.  In a digraph there are $d^l$ paths of length $l$ passing through a vertex in a fixed position. It follows that the number of paths of length $2\leq l\leq k$ which include a vertex of $N(S)$ as one of its inner vertices is at most
	\begin{align*}
  	\sum_{l=2}^{k} x(l-1)\cdot d^l &= xd\sum_{l=2}^{k} (ld^{l-1}-d^{l-1}) \\
	&= xd\left(f_{d}(k)-1-\frac{d^k-d}{d-1}\right) \\
	&\leq xd\left(\frac{kd^{k}}{(d-1)}-1-\frac{d^k-d}{d-1}\right) \\
    &\leq \left(\frac{d+1}{d-1}\right)\cdot x(k-1)d^k. 
	\end{align*}
	
	It follows that 
	\[s(n-s-x)\leq \left(\frac{d+1}{d-1}\right)\cdot x(k-1)d^k\]
	
	Hence
	\begin{align*}
	\phi_V(G) &= \frac{x}{s} \geq 
	\frac{n-s}{\left(\frac{d+1}{d-1}\right) (k-1)d^k+s} \\
	&\geq \frac{0.5n}{\left(\frac{d+1}{d-1}\right) (k-1)d^k+0.5n} \\
	&\geq \frac{\alpha \cdot \mu_{d,k}}{2\left(\frac{d+1}{d-1}\right)(k-1)d^k+\alpha \cdot \mu_{d,k}} \\
	&= \frac{\alpha \cdot d^{k+1}}{2(d+1)(k-1)d^k+\alpha \cdot d^{k+1}} \\
	&= \frac{\alpha \cdot d}{2(d+1)(k-1)+\alpha \cdot d},
	\end{align*}
	as claimed.
\end{proof}

\subsection{Diameter-$2$ and Diameter-$3$ Graphs} \label{app:refined}

\begin{proof}(of Theorem \ref{2-diam ve}) Let $S\subset V$ be a subset of size $s\leq \frac{n}{2}$, and let $x=|N(S)|$ denote the size of its outer boundary. Then at least
	$|S|\cdot |S^c\setminus N(S)|=s(n-s-x)$
	pairs of vertices are connected via length-$2$ paths whose middle vertex lies in $N(S)$. However, the number of such paths through a fixed middle vertex is at most $\left(\frac{d}{2}\right)^2$. It follows that
	\[s(n-s-x)\leq x\left(\frac{d}{2}\right)^2,\]
	and rearranging this yields the inequality
	\[s(n-s) \leq x\left(\frac{d^2}{4}+s\right).\]
	We thus have
	\[\phi_V(G) \geq \frac{x}{s} \geq \frac{n-s}{0.25d^2+s}
	\geq \frac{0.5n}{0.25d^2+0.5n} = \frac{2\alpha}{2\alpha+1}. \qedhere\]
\end{proof}

\begin{proof}(of theorem \ref{3-diam ve}) Again, let $S\subset V$ be a subset of size $s\leq \frac{n}{2}$, and let $x=|N(S)|$ denote the size of its outer boundary. Then at least
	$|S|\cdot |S^c\setminus N(S)|=s(n-s-x)$
	pairs of vertices are connected via paths of length $2$ or $3$ which pass through $N(S)$. Note that a length-$3$ path of this kind must contain a length-$2$ path whose middle vertex lies in $N(S)$ and both ends in different sides of the cut. It follows that there are at most $2\left(\frac{d}{2}\right)^2(d-1)$ such length-$3$ paths and $\left(\frac{d}{2}\right)^2$ such length-$2$ paths. This implies that
	\[
	s(n-s-x)\leq x\cdot \left(2\left(\frac{d}{2}\right)^2(d-1)+\left(\frac{d}{2}\right)^2\right),
	\]
	and hence
	\[s(n-s)\leq x\cdot \left(\frac{d^2}{2}(d-1)+\frac{d^2}{4}+s\right).\]
	We thus have
	\[
	\phi_V(G) = \frac{x}{s} \geq \frac{n-s}{\frac{d^2}{2}(d-1)+\frac{d^2}{4}+s}
	\geq \frac{0.5n}{\frac{d^3}{2} - \frac{d^2}{4}+ 0.5n} = \frac{\alpha}{\alpha+1-\frac{1}{d}}. \qedhere
	\]
\end{proof}

\section{Implications for Known Constructions} \label{sec: implications}

\subsection{Undirected Graphs}
The results in section \ref{coarse} can be directly applied to obtain expansion guarantees for well known constructions of large $(d,k)$-graphs. The most general of these is perhaps the \textit{undirected de Bruijn graph} \cite{de1946combinatorial}, which may be constructed for every diameter $k$ and even degree $d$ (the detailed definition may be found in~\cite{miller2005moore}). These graphs are of size $n \geq \left(\frac{d}{2}\right)^k$ and have been extensively applied in various contexts, including the design of feedback registers \cite{fredricksen1982survey,lempel1970homomorphism}, decoders \cite{collins1992vlsi}, and computer networks \cite{bermond1989large,bermond1989bruijn,esfahanian1985fault,leighton2014introduction,pradhan1985fault}. 

As for the spectral guarantees for this construction, since the second eigenvalue of these graphs is known to be $\lambda_2=d\cos\left(\frac{\pi}{k+1}\right)$ ~\cite{delorme1998spectrum}, applying the Cheeger inequality yields 
\begin{align*}
h_e(G) &\geq \frac{d-d\cos(\frac{\pi}{k+1})}{2} \sim \frac{d}{4} \left(\frac{\pi}{k+1}\right)^2.
\end{align*}
The vertex expansion is thus
\begin{align*}
\phi_V(G) \geq \frac{1}{4} \left(\frac{\pi}{k+1}\right)^2.
\end{align*}
While applying Theorem~\ref{coarse ee} implies weaker guarantees, whenever $k=2$ this construction is a $\frac{1}{4}$-approximation to the Moore Bound, and thus by the refined argument in Theorem \ref{2-diam ve}, we have
\[\phi_V(G)\geq \frac{1}{3}.\]
This constitutes the best-known vertex-expansion guarantee for $(d,2)$-de Bruijn graphs. 

Canale and Gomez \cite{canale2005asymptotically} made considerable progress on the degree-diameter problem by giving a construction of $(d,k)$-graphs of size $n \geq \left(\frac{d}{1.57}\right)^k$ for an infinite set of values $d$ and $k$. In these graphs the expansion guarantees from our theorems are slightly better:
\begin{align*}
h_e(G) &\geq \frac{d}{2k\cdot 1.57^k} \cdot \left(1-\frac{1}{(d-1)^k}\right), \text{ and} \\
\phi_V(G) &\geq \frac{1.57^{-k}}{2(k-1) +1.57^{-k}}.
\end{align*}

This, to the best of our knowledge, is the first analysis of the Canale-Gomez graphs and thus these results constitute the highest expansion guarantees for this construction.

Since the only known constructions that actually draw close to the Moore Bound are of diameter $k=2$, this case is of particular importance for us. The largest known such constructions are based on \textit{polarity graphs}, first introduced by Erd\H{o}s and Renyi~\cite{erdos1962problem} and then independently by Brown~\cite{brown1966graphs}. The design of these graphs makes use of finite  projective geometries in order to produce $d$-regular graphs of diameter $2$ and of size $n=d^2 - d+1$. Another important construction, that attempts to yield large $(d,k)$-graphs that (unlike polarity graphs) are also \textit{vertex-transitive}, was introduced by McKay, Miller and Siran in \cite{mckay1998note}. This property aims to capture some sort of symmetry by the requirement that the automorphism group of the graph acts transitively upon its vertices. This construction, known as MMS-graphs, is of size  $n=\frac{8}{9}(d+\frac{1}{2})^2$ and diameter $2$ and was proposed as the topology of high performance computing networks in \cite{besta2014slim} due to its good performance in simulation in terms of latency, bandwidth, resiliency, cost, and power consumption.

Applying Theorems~\ref{2-diam ee} and \ref{2-diam ve} to \textit{polarity graphs} imply that these graphs enjoy expansion of
\begin{align*}
h_e(G) &\geq \frac{2d+1-\sqrt{4d+1}}{4}, \text{ and } \\
\phi_V(G) &\geq \frac{2}{3}.
\end{align*}

We note that as $\frac{d}{2}-\frac{\sqrt{d-1}}{2}$ and $\frac{1}{2}$ are the best known lower bound for the edge and vertex expansion respectively (obtained by applying the Cheeger inequality for the known spectral gap of these graphs), both bounds depicted here constitute the best expansion guarantees to date for this important construction.

Applying the same theorems for \textit{MMS-graphs} yields
\begin{align*}
h_e(G) &\geq \frac{2d+1-\sqrt{\frac{4}{9}d^2+4d+1}}{4} \approx \frac{d}{3}, \text{ and} \\
\phi_V(G) &\geq \frac{\frac{16}{9}}{\frac{16}{9}+1} = \frac{16}{25}.
\end{align*}

Here, the best known bounds are $\frac{2d+1}{6}$ and $\frac{1}{3}+\frac{1}{6d}$ respectively (also derived from the Cheeger inequality). Note that while the edge expansion guarantee presented here is slightly weaker, the vertex expansion guarantee is substantially tighter.

\subsection{Directed Graphs}
In the case of directed graphs, the state of the art for degree $d\geq2$ and diameter $k\geq 4$ are graphs of size $n = 25\cdot 2^{k-4}$ obtained from the \textit{Alegre digraph} (see \cite{miller2005moore}) and its iterated line digraphs. Applying Theorem~\ref{coarse digraph} yields the bounds
\begin{align*}
h(G) &\geq \left(\frac{2}{d}\right)^k \cdot \frac{25}{16} \cdot \frac{1}{2k} \left(d-\frac{1}{d^k}\right), \text{ and} \\
\phi_V(G) &\geq \frac{\left(\frac{2}{d}\right)^k \cdot \frac{25}{16} \cdot d}{2(d+1)(k-1)+\left(\frac{2}{d}\right)^k \cdot \frac{25}{16} \cdot d}.
\end{align*}

For the remaining values of degree and diameter, the iterated line digraphs of complete digraphs (known in the literature as \textit{Kautz digraphs}~\cite{elspas1968theory}) have been proposed as the underlying topology in the design of computer networks and architectures in \cite{bermond1989bruijn,bermond1989large}. These graphs are of size $n = d^k + d^{k-1}$, and thus by Theorem~\ref{coarse digraph} enjoy expansion of 
\begin{align*}
h(G) &\geq \frac{1}{2k} \left(d-\frac{1}{d^k}\right), \text{ and} \\
\phi_V(G) &\geq \frac{d}{2(d+1)(k-1)+d}.
\end{align*}

Here again, these bound represent the best expansion guarantees to date. While these expressions do not demonstrate near-optimal expansion, let us recall that most applications of expander graphs only require $h(G)$ and $\phi _V (G)$ to be bounded away from zero (see~\cite{hoory2006expander} for a variety of examples). Hence these bounds suffice for a number of desired properties and potential applications whenever the diameter is sufficiently low.

\end{document}